\documentclass[11pt]{amsart}
\usepackage[top=1.0in,bottom=1.0in,left=1.0in,right=1.0in]{geometry}
\usepackage[T1]{fontenc}
\usepackage[utf8]{inputenc}
\usepackage{lmodern}
\usepackage{amsmath,amsthm,amssymb,mathtools}
\usepackage{enumitem}
\usepackage{xcolor}
\usepackage[backref=page]{hyperref}
\usepackage{microtype}
\usepackage{mathrsfs}
\definecolor{ash}{RGB}{120,135,125}
\hypersetup{colorlinks=true, linkcolor=blue, citecolor=blue}
\usepackage[capitalise]{cleveref}
\newtheorem{theorem}{Theorem}[section]

\newtheorem{proposition}[theorem]{Proposition}
\newtheorem{corollary}[theorem]{Corollary}
\newtheorem{remark}[theorem]{Remark}
\newtheorem{question}[theorem]{Question}
\theoremstyle{definition}

\newcommand{\Gam}{\Gamma}
\newcommand{\Cay}{\operatorname{Cay}}
\newcommand{\ZZ}{\mathbb{Z}}
\newcommand{\RR}{\mathbb{R}}

\newcommand{\norm}[1]{\left\|#1\right\|}
\newcommand{\E}{\mathbb{E}}

\newcommand{\floor}[1]{\left\lfloor #1\right\rfloor}
\newcommand{\ceil}[1]{\left\lceil #1\right\rceil}

\newcommand{\doi}[1]{\href{https://doi.org/#1}{\textcolor{ash}{doi:\,#1}}}
\newcommand{\mr}[1]{\href{https://mathscinet.ams.org/mathscinet-getitem?mr=#1}{\textcolor{ash}{MR#1}}}

\title[Fast robbers on abelian Cayley graphs and digraphs]{Fast robbers on abelian Cayley graphs and digraphs}
\author{Arindam Biswas}
\address{}
\email{arin.math@gmail.com}
\thanks{}

\subjclass[2020]{Primary 05C57; Secondary 05C20, 91A24, 91A43}

\keywords{Cops and Robbers, fast robber, abelian Cayley digraphs, unbounded-speed robber, Meyniel's conjecture}

\begin{document}

\begin{abstract}
We study the fast-robber version of the Cops and Robbers game on finite strongly
connected abelian Cayley digraphs, including undirected Cayley graphs as the symmetric case. For bounded out-degree $D$, we show that
$c_{1,\infty}\left(\Gamma\right)=O_D\left(n^{1-\frac{1}{D}}\right)$; in the
undirected case with $D\ge2$, this improves to the optimal exponent
$1-\frac{1}{\left\lfloor \frac{D}{2}\right\rfloor}$.
We also establish the degree-independent bound
$c_{1,\infty}\left(\Gamma\right)=O\!\left(\frac{n\left(\log\log n\right)^2}{\log n}\right)$.
These estimates follow from an optimized character-theoretic cyclic sweep over
subgroup quotients.
\end{abstract}

\maketitle

\section{Introduction}

The game of Cops and Robbers is a pursuit--evasion game on a finite connected
graph. A team of cops and a single robber occupy vertices and move alternately
along edges with speed at most one; the cops win when one of them reaches the
robber's vertex. The game was introduced independently by Nowakowski and
Winkler \cite{NowakowskiWinklerVertexToVertexPursuitInGraph} and Quilliot
\cite{QuilliotThesis}. Its basic parameter is the \emph{cop number}
$c\left(\Gamma\right)$, introduced by Aigner and Fromme
\cite{AignerFrommeGameOfCopsRobbers}, defined as the minimum number of cops
that guarantee capture on $\Gamma$. The unit-speed game has been studied from
structural, extremal, probabilistic, geometric, and algorithmic perspectives;
see the monograph of Bonato and Nowakowski
\cite{BonatoNowakowskiSTMLCopsRobbers}.

One of the main open problems in the ordinary game is Meyniel's conjecture \cite{FranklLargeGirthCayley}, which predicts that every connected $n$-vertex graph satisfies
$
  c\left(\Gamma\right)=O\left(\sqrt n\right).
$
For abelian Cayley graphs, the conjecture was established by Bradshaw
\cite[Theorem 1.1]{BradshawProofOfMeynielConjAbelianCayleyGraphs} and by
Bradshaw--Hosseini--Turcotte
\cite[Theorem 3.8]{BradshawHosseiniTurcotteCopsRobbersDirUnditAbelianCayley}.
In a different direction, Frankl proved the sharp degree-sensitive bound
$
  c\left(\Cay\left(G,S\right)\right)\le \left\lceil\frac{\left|S\right|+1}{2}\right\rceil
$
for connected undirected abelian Cayley graphs \cite{FranklPursuitGameOnCayleyGraph}. Thus even in the classical unit-speed game, abelian Cayley graphs already exhibit an interplay between order-sensitive and degree-sensitive estimates.

Bounded-degree graphs retain much of the difficulty of Meyniel-type problems.
For instance, Hosseini, Mohar, and Gonz\'alez Hermosillo de la Maza showed
that Meyniel's conjecture for subcubic graphs would imply the weak Meyniel
conjecture for all graphs \cite{HosseiniMoharGonzalezBoundedDegree}. This
motivates determining the exact exponent in the bounded-degree regime.

In this article we consider the \emph{fast-robber} variant, in which the cops have speed $1$ and the
robber may traverse a directed path of length at most $s\ge 1$ in one turn,
provided that the path avoids vertices occupied by cops. More generally,
$c_{r,s}\left(\Gamma\right)$ denotes the minimum number of speed-$r$ cops
required to capture a speed-$s$ robber. We write
$c_{1,\infty}\left(\Gamma\right)$ when the robber may traverse an arbitrary
finite cop-free directed path in one turn. In the undirected case, paths may
traverse edges in either direction. This model was studied systematically by
Fomin--Golovach--Kratochv{\'\i}l--Nisse--Suchan
\cite{FominGolovachKratochvilNisseSuchanPursuingFastRobber}, and subsequently
by Alon--Mehrabian \cite{AlonMehrabian}, Frieze--Krivelevich--Loh
\cite{FriezeKrivelevichLohVariationsOnCopsRobbers}, Mehrabian
\cite{Mehrabian}, and others.

The work of \cite{FominGolovachKratochvilNisseSuchanPursuingFastRobber} connected fast-robber pursuit to graph decompositions and width parameters. Frieze, Krivelevich, and Loh proved that, for each fixed finite robber speed $s$, every connected undirected $n$-vertex graph satisfies
$$
  c_{1,s}\left(\Gamma\right)
  \le
  \frac{n}{\alpha^{\left(1-o\left(1\right)\right)\sqrt{\log_\alpha n}}},
  \qquad \alpha=1+\frac1s,
$$
and they constructed graphs with $c_{1,\infty}\left(\Gamma\right)=\Omega\left(n\right)$ \cite[Theorems 1.3 and 1.4]{FriezeKrivelevichLohVariationsOnCopsRobbers}. Thus no sublinear upper bound is possible for an unbounded-speed robber on arbitrary graphs.

Mehrabian conjectured that for every fixed $s\ge 1$, every connected $n$-vertex graph satisfies $c_{1,s}\left(\Gamma\right)=O\left(n^{\frac{s}{s+1}}\right)$ \cite{Mehrabian}. Alon and Mehrabian proved that this exponent would be best possible: for every fixed $s\ge 1$ there exist connected $n$-vertex graphs with
$
  c_{1,s}\left(\Gamma\right)=\Omega\left(n^{\frac{s}{s+1}}\right).
$
For certain orders, their extremal construction is an abelian Cayley graph on
$\mathbb Z_2^{\,1+k\left(s+1\right)}$ of degree $2^k$. Their construction for
arbitrary orders is obtained by adjoining a path, an operation that does not
preserve the Cayley property \cite[Section 2]{AlonMehrabian}. Since an
undirected graph may be viewed as a symmetric digraph, the same lower bound
also applies in the directed setting. Balister, Bollob\'as, Narayanan, and Shaw
proved a different lower bound on the $N\times N$ grid, which has $n=N^2$
vertices: when the robber speed exceeds a sufficiently large absolute
constant, the required number of cops is
$\exp\!\left(\Omega\!\left(\frac{\log n}{\log\log n}\right)\right)$, which is
superpolylogarithmic but subpolynomial
\cite{BalisterBollobasNarayananShaw2017}. When both players have the same
speed, the fast game admits an exact reduction to the ordinary game. Mehrabian
proved for undirected graphs that
$
  c_{s,s}\left(\Gamma\right)=c\left(\Gamma^{\left[s\right]}\right),
$
where $\Gamma^{\left[s\right]}$ is obtained by joining pairs of vertices at
distance at most $s$ in $\Gamma$
\cite[Theorem 6.1]{MehrabianFastRobberExpandersRandomGraphs}. The same argument,
with directed distance, gives the corresponding identity for every finite
digraph; see Section~\ref{sec:symmetric}. The remaining case is asymmetric
pursuit, in which only the robber is accelerated. In the dense regime, the
domination bound from \cite[Section 4]{MehrabianFastRobberExpandersRandomGraphs},
together with the Arnautov--Payan estimate, gives a general upper bound. In the
sparse regime, the problem remains largely open: the conjectured fixed-speed
exponent $\frac{s}{s+1}$ tends to $1$ as $s\to\infty$ and therefore yields no
sublinear estimate uniform in the robber speed. To the best of the author's
knowledge, comparatively little is known for digraphs when the robber has
speed $s>1$.

For a fast robber, dependence on the degree is necessary for any polynomial
improvement over the trivial linear bound. Indeed, the weak Meyniel conjecture
fails for the unbounded-speed game even within the class of abelian Cayley
graphs: by Remark~\ref{rem:near-tight} there are connected abelian
Cayley graphs of degree $O\left(\log n\right)$ with
$
  c_{1,\infty}\left(\Gam\right)=\Omega\!\left(\frac{n}{\log n}\right),
$
and $n^{1-\varepsilon}=o\!\left(\frac n{\log n}\right)$ for every
$\varepsilon>0$. Hence no bound of the form
$c_{1,\infty}\left(\Gam\right)=O\left(n^{1-\varepsilon}\right)$ can hold
uniformly in the degree. The bound in
Theorem~\ref{thm:intro-degree-free} is therefore optimal up to a factor of
$\left(\log\log n\right)^2$. Any polynomial improvement must be
degree-sensitive; Theorem~\ref{thm:intro-bounded} provides such an improvement
and gives the optimal exponent in the undirected case.

The principal objectives of this article are:
\begin{enumerate}
  \item to formulate the fast-robber game on digraphs and establish quotient-lifting tools and an equal-speed reduction;
  \item to prove an optimized quotient-sweeping bound for asymmetric pursuit that is uniform in the robber's speed;
  \item to deduce a degree-independent sublinear bound for finite abelian Cayley digraphs and obtain sharp exponents for undirected graphs of bounded degree; and
  \item to determine the asymptotic order of the fast-robber cop number of the Alon--Mehrabian Cayley graphs, including for an unbounded-speed robber.
\end{enumerate}

The proofs use character theory to construct a cyclic quotient in which every
generator has bounded cyclic displacement. A moving full-fibre barrier and a stationary
full-fibre barrier then confine the robber. Since the strategy is independent
of the length of the robber's path, it also applies to an unbounded-speed
robber.

\subsection{Main results}
The equal-speed reduction yields the following degree-sensitive estimates.

\begin{theorem}[Equal speeds]\label{thm:intro-equal}
Let $\Gamma=\Cay\left(G,S\right)$ be a strongly connected abelian Cayley
digraph of out-degree $d$ and order $n$. Then, for every $s\ge 1$,
$$
  c_{s,s}\left(\Gamma\right)
  = c\left(\Gamma^{\left[s\right]}\right)
  \le
  \min\!\left\{
    \sqrt{\frac{2}{e\left(\sqrt2-1\right)}}\sqrt n+2,\,
    \binom{d+s}{s}
  \right\}.
$$
If $S=-S$, the right-hand side may be replaced by
$$
  \min\!\left\{
    \frac{1}{\sqrt{e\left(\sqrt2-1\right)}}\sqrt n+\frac72,\,
    \left\lceil\frac12\binom{d+s}{s}\right\rceil
  \right\}.
$$
\end{theorem}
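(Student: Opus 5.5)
The plan has two parts: an exact reduction of the equal-speed game to the ordinary game on the power digraph, and then the two known upper bounds for ordinary cops applied to that power digraph, which is again an abelian Cayley digraph.

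\emph{The identity.} First I establish $c_{s,s}(\Gamma)=c(\Gamma^{[s]})$, where $\Gamma^{[s]}$ has an arc $u\to v$ whenever $0<\mathrm{dist}_\Gamma(u,v)\le s$. This is the directed form of Mehrabian's argument.

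Given a winning strategy for $k$ unit-speed cops on $\Gamma^{[s]}$, the speed-$s$ cops on $\Gamma$ copy it. Each cop move in $\Gamma^{[s]}$ is realized by a directed path of length at most $s$ in $\Gamma$. Each robber move in $\Gamma$ is a cop-free path of length at most $s$, so its endpoint is adjacent to, or equal to, its start in $\Gamma^{[s]}$. Capture in $\Gamma^{[s]}$ means equal positions, which is capture in $\Gamma$.

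For the converse, suppose $k$ speed-$s$ cops win on $\Gamma$. Their moves are legal unit moves in $\Gamma^{[s]}$. A robber move in $\Gamma^{[s]}$ corresponds to a path of length at most $s$ in $\Gamma$. If that path passes through a cop, then a cop reaches the robber's target within distance $s$ and can capture on the next move. This is the one step needing care: I would argue that it never hurts the robber to stop before a cop and that the cops can still capture. This is the standard subtlety, and I would copy Mehrabian's argument with directed distance.

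\emph{Cayley structure of the power digraph.} Next, $\Gamma^{[s]}=\Cay(G,S_s)$, where $S_s=\{\sum_i a_i s_i : a_i\ge0,\ \sum_i a_i\le s\}\setminus\{0\}$ and $S=\{s_1,\dots,s_d\}$. This digraph is strongly connected, and $|S_s|\le\binom{d+s}{s}-1$, since multisets of size at most $s$ from $d$ elements number $\binom{d+s}{s}$. If $S=-S$, then $S_s=-S_s$ as well.

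\emph{The degree bound.} Apply the directed Frankl-type bound for abelian Cayley digraphs, $c\le|S|+1$ from Bradshaw--Hosseini--Turcotte, to $\Cay(G,S_s)$. This gives $c_{s,s}(\Gamma)\le\binom{d+s}{s}$. In the symmetric case, Frankl's bound $\lceil(|S_s|+1)/2\rceil$ gives $\lceil\frac12\binom{d+s}{s}\rceil$.

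\emph{The order bound.} Apply the Meyniel-type bounds for abelian Cayley (di)graphs, which are independent of the generating set, to $\Gamma^{[s]}$. In the directed case this is the constant $\sqrt{2/(e(\sqrt2-1))}\sqrt n+2$ from Bradshaw--Hosseini--Turcotte, Theorem~3.8. In the undirected case it is Bradshaw's $\frac{1}{\sqrt{e(\sqrt2-1)}}\sqrt n+\frac72$. Taking the minimum of the two bounds finishes the proof.

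I expect the main obstacle to be the converse direction of the identity, namely the robber's path passing through cops. The constants themselves are simply quoted from the cited theorems, provided those hold for arbitrary generating sets.
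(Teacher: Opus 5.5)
Your proposal is correct and follows the paper's proof essentially step for step. It uses the directed form of Mehrabian's reduction: if every $u$--$v$ path of length at most $s$ meets a cop, then a cop on an internal vertex of one such path is within directed distance $s$ of $v$ and captures on the next move, so the remark about the robber ``stopping before a cop'' is not needed. It then identifies $\Gamma^{[s]}=\Cay(G,S^{[s]})$ with $|S^{[s]}|\le\binom{d+s}{s}-1$ and quotes the Bradshaw--Hosseini--Turcotte and Frankl bounds, exactly as the paper does.

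One bookkeeping correction: the paper takes the directed bounds ($|S|+1$ and the $\sqrt n$ constant) from Bradshaw--Hosseini--Turcotte, Proposition~4.1 and Theorem~4.8, and the undirected constant $\frac{1}{\sqrt{e(\sqrt2-1)}}\sqrt n+\frac72$ from their Theorem~3.8. Your theorem numbers and attributions are therefore swapped.
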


For asymmetric pursuit, optimizing the character argument over all subgroup
quotients gives the following estimate.

\begin{theorem}[Optimized quotient bound]\label{thm:intro-quotient}
Let $\Gamma=\Cay\left(G,S\right)$ be a strongly connected abelian Cayley digraph of out-degree $d$ and order $n$. For every proper subgroup $K<G$, define
$$
  r_K:=\left|\left\{\left\{\bar g,-\bar g\right\}:g\in S,\ \bar g\ne0
  \text{ in }G/K\right\}\right|,
  \qquad
  N_K:=\floor{\left(\left|G/K\right|-1\right)^{\frac{1}{r_K}}},
$$
where $\bar g:=g+K$, and let $N_*:=\max_{K<G}N_K$. Then, for every $s\ge1$,
$$
  c_{1,s}\left(\Gamma\right)\le c_{1,\infty}\left(\Gamma\right)
  \le
  \min\!\left\{
    \frac{n\left(1+\log\left(d+1\right)\right)}{d+1},\,
    \frac{2n}{N_*}
  \right\}.
$$
\end{theorem}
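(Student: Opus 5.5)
The first bound in the minimum is a one-round domination argument, and the second is a two-barrier sweep along a cyclic quotient produced by a pigeonhole (Dirichlet-type) argument on characters. The inequality $c_{1,s}\le c_{1,\infty}$ is immediate, since a speed-$s$ robber is a restricted unbounded-speed robber. So it suffices to bound $c_{1,\infty}(\Gamma)$ by each term of the minimum separately.

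\textbf{The domination term.} Call a set $D\subseteq G$ \emph{in-dominating} if every vertex $v$ lies in $D$ or has an in-neighbour $v-g\in D$ with $g\in S$. In a Cayley digraph every closed in-neighbourhood $\{v\}\cup(v-S)$ has exactly $d+1$ elements. The Arnautov--Payan greedy/probabilistic estimate, applied to the hypergraph of closed in-neighbourhoods, then gives such a $D$ with $|D|\le n(1+\log(d+1))/(d+1)$. Place one cop on each vertex of $D$. Wherever the robber places itself, some cop either already sits on it or can move onto it along an out-edge in the first cop turn. This is capture before the robber moves at all, so the robber's speed is irrelevant.

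\textbf{The quotient term, step 1: a good cyclic quotient.} Fix $K<G$ attaining $N:=N_*=N_K$, and write $Q=G/K$ and $r=r_K$. We may assume $N\ge2$, since otherwise $2n/N\ge n$ and placing a cop on every vertex is enough. Choose representatives $g_1,\dots,g_r$ of the $\pm$-classes of nonzero images $\bar g$. Map each character $\chi\in\widehat Q$ to the point $(\arg\chi(\bar g_i)/2\pi)_i\in[0,1)^r$, and cut the torus into $N^r\le |Q|-1<|Q|$ boxes of side $1/N$. By pigeonhole, two characters land in the same box, and their quotient $\chi\ne1$ satisfies $\|\theta_g\|_{\RR/\ZZ}<1/N$ for every $g\in S$ (the bound for $-g$ follows from that for $g$, and it is trivial for $\bar g=0$).

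Then $\chi$ composed with $G\to Q$ is a surjection $\phi:G\to\ZZ_m$ with $m\ge2$ and fibres of size $n/m$. Every generator changes the level $\phi$ by a cyclic displacement $\delta_g$ with $|\delta_g|<m/N$. Set $w:=\ceil{m/N}$, so every step, whether a robber step or a cop step, moves the level by at most $w-1$ in absolute value. Consequently no directed path can pass from one side of a \emph{band} of $w$ consecutive full fibres to the other without entering the band. Strong connectivity forces some generator $g_0$ with $\delta_{g_0}\ne0$, since otherwise $\phi$ would be constant.

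\textbf{The quotient term, step 2: the sweep.} Use two bands of $w$ full fibres each, a stationary band $B_0$ and a moving band $B$, for a total of $2wn/m\le 2n/N$ cops up to the rounding in $w$. The robber is confined to a cyclic interval of levels between $B_0$ and $B$. Each round every cop in $B$ moves by the same generator $\pm g_0$, chosen so that $B$ translates toward $B_0$ across the robber's interval. A translate of a band is again a band of width $w$, and the translation is a bijection of vertices realised by single edges, so the cops stay legal and the barrier is never opened. The robber's interval therefore strictly shrinks (by $|\delta_{g_0}|\ge1$ levels) every round, until the robber's only options are vertices that a cop in $B$ can step onto, and it is captured. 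The argument never uses the length of the robber's path, so it works for an unbounded-speed robber.

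\textbf{Expected obstacle.} The delicate step is the integer bookkeeping in the sweep. One must check that the band width $\ceil{m/N}$ together with the strict inequality $|\delta_g|<m/N$ really blocks every crossing. One must also check that the moving band never overshoots into $B_0$ in a way that lets the robber slip past; I would handle this by stopping the translation once the two bands overlap. Finally, the cop count needs to come out as $\le 2n/N$ and not $2n/N+O(n/m)$. If the rounding does not close, I would shrink $w$ to $\floor{m/N}$ plus a case analysis, or absorb the overlap, using that $\|\theta_g\|<1/N$ is strict.
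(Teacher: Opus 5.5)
Your route is the paper's: domination, a Friedman--Murty--Tillich pigeonhole on $\widehat{G/K}$ giving a cyclic quotient $\ZZ_m$ with $|\delta_g|<m/N$, and a stationary-plus-moving two-barrier sweep. However, the obstacle you flagged is a real gap. As written, your argument does not reach the constant $2$.

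With bands of width $w=\ceil{m/N}$ you spend $2\ceil{m/N}\,n/m$ cops. This exceeds $2n/N$ whenever $N\nmid m$, and it approaches $4n/N$ when $m$ is only slightly larger than $N$. For instance, with $N=3$ and $m=4$ your two bands fill all of $\ZZ_4$, so you use $n$ cops instead of at most $2n/3$.

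The missing observation is that a band only needs width equal to the largest single-step displacement, not one more. Suppose $|\delta_g|\le Q$ for every $g\in S$. A step that starts in one gap and ends in the other would have to pass over all $Q$ residues of a band, so it would need displacement at least $Q+1$. Hence take $Q:=\ceil{m/N}-1$:
\begin{itemize}
\item $Q\ge1$, because $m>N$ (some $\delta_g\ne0$ and $|\delta_g|<m/N$);
\item $Q$ bounds every $|\delta_g|$, by integrality and the strict inequality;
\item $Q<m/N$, so the two bands use $2Qn/m<2n/N$ cops.
\end{itemize}
Your fallback width $\floor{m/N}$ works for the same reason, with no case analysis, since $\floor{m/N}\ge\ceil{m/N}-1$.

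You must also make sure that two disjoint width-$Q$ bands fit in $\ZZ_m$ with a nonempty gap for the robber. The paper does this by first disposing of $N\le2$, where $2n/N\ge n$ is trivial. For $N\ge3$ one has $m\ge4$ and $Q<m/3$, hence $2Q+2\le2\ceil{m/3}\le m$.

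Overshoot needs no stopping rule. The moving band advances $|\delta_{g_0}|\le Q$ residues per round. Once the robber's gap has at most $|\delta_{g_0}|$ residues, the next translate covers the whole gap and the full-fibre band captures the robber. Any overlap with $B_0$ at that moment is harmless.
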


\begin{theorem}[Bounded degree]\label{thm:intro-bounded}
Fix $D\ge1$. Every strongly connected abelian Cayley digraph $\Gamma=\Cay\left(G,S\right)$ of order $n$ with $\left|S\right|\le D$ satisfies, for every $s\ge1$,
$$
  c_{1,s}\left(\Gamma\right)\le c_{1,\infty}\left(\Gamma\right)
  =O_D\left(n^{1-\frac{1}{D}}\right).
$$
If $S=-S$ and $D\ge2$, this improves to
$
  O_D\left(n^{1-\frac{1}{\left\lfloor \frac{D}{2}\right\rfloor}}\right),
$
and this exponent is best possible: by Section~\ref{sec:conclusion} there are
connected undirected abelian Cayley graphs of degree at most $D$, of
arbitrarily large order, with
$
  c_{1,\infty}\left(\Gam\right)
  =\Omega_D\!\left(n^{1-\frac{1}{\left\lfloor \frac D2\right\rfloor}}\right).
$
\end{theorem}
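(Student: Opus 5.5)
The plan is to deduce the upper bounds directly from Theorem~\ref{thm:intro-quotient} by taking $K=\{0\}$, and to obtain the lower bound from the construction of Section~\ref{sec:conclusion}. The inequality $c_{1,s}\le c_{1,\infty}$ is immediate, since an unbounded-speed robber may always choose to move at most $s$ steps. It therefore suffices to show that $N_*\ge N_{\{0\}}=\Omega_D\left(n^{1/D}\right)$ in general and $N_{\{0\}}=\Omega_D\left(n^{1/\lfloor D/2\rfloor}\right)$ when $S=-S$. The bound $c_{1,\infty}\le 2n/N_*$ then gives the claimed exponents.

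\textbf{General case.} Take $K=\{0\}$, a proper subgroup provided $n\ge2$ (the case $n=1$ is trivial). Then $r_{\{0\}}$ counts the pairs $\{g,-g\}$ with $g\in S\setminus\{0\}$. Each such pair contains at least one element of $S$, so $1\le r_{\{0\}}\le |S|\le D$; we have $r_{\{0\}}\ge1$ because strong connectivity with $n\ge2$ forces $S\setminus\{0\}\neq\emptyset$. Hence
$$N_{\{0\}}=\floor{(n-1)^{1/r_{\{0\}}}}\ge\floor{(n-1)^{1/D}}\ge\tfrac12 (n-1)^{1/D}$$
for $n\ge2$, using $\floor{x}\ge x/2$ for $x\ge1$. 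Consequently $2n/N_*\le 4n/(n-1)^{1/D}=O_D\left(n^{1-1/D}\right)$.

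\textbf{Symmetric case, $S=-S$, $D\ge2$.} Here the nonzero elements of $S$ split into pairs $\{g,-g\}$. A pair has two elements unless $g=-g$, that is, unless $g$ is an involution. If there are $a$ involutions in $S$ and $b$ two-element pairs, then $r_{\{0\}}=a+b$ and $a+2b\le D$. This alone does not give $r\le\lfloor D/2\rfloor$, because involutions spoil the count. This is the main obstacle.

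To handle it, I would choose $K$ to be the subgroup generated by the involutions in $S$. It has order at most $2^a$, and it is proper unless $G$ is generated by involutions, a case I treat separately below. In $G/K$ the involutions become zero. Every surviving pair $\{\bar g,-\bar g\}$ comes from a two-element pair, so $r_K\le b\le \lfloor (D-a)/2\rfloor\le\lfloor D/2\rfloor$. Moreover $|G/K|\ge n/2^a\ge n/2^D$. If $r_K\ge1$, then
$$N_K\ge\floor{(n2^{-D}-1)^{1/\lfloor D/2\rfloor}}=\Omega_D\left(n^{1/\lfloor D/2\rfloor}\right).$$
If instead $r_K=0$ or $K=G$, then $G$ is generated by at most $D$ involutions, so $n\le2^D$ and the bound holds trivially with a constant depending on $D$. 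One should also check the degenerate case $|G/K|-1<1$, which is covered by the same bounded-$n$ argument. Combining these with Theorem~\ref{thm:intro-quotient} gives $c_{1,\infty}=O_D\left(n^{1-1/\lfloor D/2\rfloor}\right)$.

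\textbf{Sharpness.} Sharpness is not proved here; it is the content of the construction deferred to Section~\ref{sec:conclusion}. I expect that construction to be a product of $k=\lfloor D/2\rfloor$ cycles (or a torus $\ZZ_m^k$ with the standard generators $\pm e_i$), where a fast robber forces about $m^{k-1}=n^{1-1/k}$ cops via a separator argument. I would simply cite it. The only delicate point in the upper bound is therefore the involution issue handled by passing to $G/K$.
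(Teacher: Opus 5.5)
Your proposal is correct and follows the paper's route. Both arguments deduce the bound from Theorem~\ref{thm:intro-quotient}, and in the symmetric case you make exactly the paper's choice $K=\langle T\rangle$: you use $|G/K|\ge n/2^D$ and $r_K\le\lfloor D/2\rfloor$, absorb the case $K=G$ into the constant, and cite the Cartesian powers of cycles for sharpness. The one difference is in the directed case, where the paper also quotients by $\langle T\rangle$ but you take $K=\{0\}$; this is valid because $r_{\{0\}}\le|S|\le D$, it avoids the case split, and it even gives the absolute bound $c_{1,\infty}(\Gamma)\le 8n^{1-1/D}$.
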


\begin{theorem}[Degree-independent bound]\label{thm:intro-degree-free}
Every strongly connected abelian Cayley digraph $\Gamma$ of order $n$ satisfies, for every $s\ge1$,
$$
  c_{1,s}\left(\Gamma\right)\le c_{1,\infty}\left(\Gamma\right)
  =O\!\left(\frac{n\left(\log\log n\right)^2}{\log n}\right).
$$
\end{theorem}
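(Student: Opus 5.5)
We derive Theorem~\ref{thm:intro-degree-free} from Theorem~\ref{thm:intro-quotient} by a case split on the out-degree $d=|S|$. The key steps are as follows.

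\textbf{Large degree.} Suppose $d+1\ge \frac{\log n}{(\log\log n)^2}$. Then the first term of Theorem~\ref{thm:intro-quotient} gives
$$
c_{1,\infty}(\Gamma)\le \frac{n(1+\log(d+1))}{d+1}.
$$
The map $x\mapsto (1+\log x)/x$ is decreasing for $x\ge1$, and $d+1\le n$. Hence the right-hand side is at most
$$
\frac{n(\log\log n)^2}{\log n}\,\bigl(1+\log\log n\bigr)=O\!\left(\frac{n(\log\log n)^3}{\log n}\right),
$$
which loses one factor of $\log\log n$. To avoid this, I would put the threshold at $d+1\ge \frac{\log n}{\log\log n}$. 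The first term is then $O\!\left(\frac{n(\log\log n)^2}{\log n}\right)$, since $1+\log(d+1)\le 1+\log n$ is not enough but $\log(d+1)\ge \log\log n-\log\log\log n$ is the relevant regime, and the decreasing property gives exactly $\frac{n\log\log n}{\log n}\cdot O(\log\log n)$.

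\textbf{Small degree.} Suppose $d<\frac{\log n}{\log\log n}$. We use the second term $2n/N_*$ and need a proper subgroup $K$ with
$$
N_K\ge c\,\frac{\log n}{(\log\log n)^2}.
$$
The simplest choice is $K=0$. Then $r_K\le d$ and $N_0=\lfloor (n-1)^{1/r_0}\rfloor$, so
$$
N_0\ge \exp\!\left(\frac{\log(n-1)}{d}\right)-1\ge \exp(\log\log n)-1\approx \log n,
$$
giving $2n/N_*=O(n/\log n)$. If instead $d$ lies slightly above the threshold, we interpolate. With threshold $T=\frac{\log n}{\log\log n}$, either $d+1\ge T$, in which case the first term gives $\frac{n(1+\log T)}{T}=O\!\left(\frac{n(\log\log n)^2}{\log n}\right)$, or $d<T$, in which case $N_0\ge n^{1/d}-1\ge (\log n)-1$. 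Both cases are therefore covered, and in fact the bound $O(n(\log\log n)^2/\log n)$ holds with room to spare. One should check the degenerate case $G$ trivial or very small, where the bound is trivial since $c_{1,\infty}\le n$.

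\textbf{Bookkeeping.} The only subtlety is the monotonicity estimate: for $d+1\ge T$,
$$
\frac{1+\log(d+1)}{d+1}\le \frac{1+\log T}{T}=\frac{(1+\log\log n-\log\log\log n)\log\log n}{\log n}.
$$
This is $O\!\left(\frac{(\log\log n)^2}{\log n}\right)$. The floor in $N_0$ also needs care, namely $N_0\ge 1$ and $N_0\ge (n-1)^{1/d}-1$, which holds for $n\ge 3$.

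I expect the main obstacle to be choosing the threshold so that neither term of Theorem~\ref{thm:intro-quotient} costs an extra $\log\log n$. Once the threshold $T=\log n/\log\log n$ is fixed, the argument is a short computation; the optimization over general $K$ in $N_*$ is not even needed.
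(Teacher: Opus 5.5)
Your proof is correct and follows the paper's argument. It uses the same split at $d\approx\log n/\log\log n$, the domination bound with monotonicity of $(1+\log x)/x$ in the dense range, and the quotient bound $2n/N_*$ with $N_*=\Omega(\log n)$ in the sparse range. The only difference is that you take $K=\{0\}$ where the paper takes $K=H$. This is legitimate here: since $r_{\{0\}}\le d$, one has $N_{\{0\}}\ge\lfloor (n-1)^{1/d}\rfloor\ge n^{1/d}-1>\log n-1$. It also spares the paper's bookkeeping with $t$ and $|H|\le 2^t$; the paper needs $H$ only for the halved exponents in the symmetric case.
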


\begin{theorem}[Alon--Mehrabian examples]\label{thm:intro-alon}
Fix $s\ge1$. For each sufficiently large $k$, let $\Gamma=\Gamma_{s,k}$ be the abelian Cayley graph constructed by Alon and Mehrabian \cite[Section 2]{AlonMehrabian}, with
$$
  G=\ZZ_2^{\,1+k\left(s+1\right)},\qquad d=2^k,\qquad n=2d^{s+1}.
$$
Then
$$
  c_{1,\infty}\left(\Gamma\right)\le\frac nd
  =2d^s
  =2^{\frac{1}{s+1}}n^{\frac{s}{s+1}}.
$$
Consequently,
$$
  c_{1,s}\left(\Gamma\right)=\Theta_s\!\left(n^{\frac{s}{s+1}}\right),
  \qquad
  c_{1,\infty}\left(\Gamma\right)=\Theta_s\!\left(n^{\frac{s}{s+1}}\right),
$$
as $k\to\infty$.
\end{theorem}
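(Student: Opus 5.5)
The upper bound $c_{1,\infty}(\Gamma)\le n/d$ will come from \cref{thm:intro-quotient} applied with a well-chosen subgroup $K$; the lower bound is the Alon--Mehrabian theorem. First recall the construction \cite[Section 2]{AlonMehrabian}. We have $G=\ZZ_2\times(\ZZ_2^k)^{s+1}$, and the generators are of the form $e_0+(x,\dots)$, where $x$ ranges over a set of $d=2^k$ elements arranged in blocks. Concretely, I expect the connection set to be $S=\{(1,v,\phi_1(v),\dots,\phi_s(v)) : v\in\ZZ_2^k\}$ or a close variant, so $|S|=d$ and every generator has first coordinate $1$. The plan is to choose $K$ so that $G/K$ is cyclic of order $2$ and every generator maps to the nonzero class. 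Taking $K=\{0\}\times\ZZ_2^{k(s+1)}$ gives $r_K=1$ and $N_K=|G/K|-1=1$. This alone yields only $2n$, which is useless, so we cannot use the quotient bound as a black box. Instead I would run the underlying barrier strategy directly: a stationary full-fibre barrier together with a moving full-fibre barrier over a quotient in which each fibre has size about $n/d$.

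The key structural step is to find a subgroup $H\le G$ of index $d$ (equivalently $|H|=n/d=2d^s$) such that the generators are spread over the cosets in a controlled way. In the natural choice, the map $G\to\ZZ_2^k$ projecting to the $v$-coordinate sends $S$ bijectively onto $\ZZ_2^k$. Placing cops on an entire fibre $C=\pi^{-1}(c)$ costs $|H|=n/d$ cops, and it blocks every edge whose endpoint lies in $C$. Since the generators hit each translate exactly once, from any vertex $x$ exactly one out-neighbour lies in the fibre $C$. This is not enough to confine the robber, because he can move to other fibres. So I would reverse the roles: choose $H$ to be the kernel of a surjection $\chi\colon G\to\ZZ_2^k$ under which all generators collapse to few classes. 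One option is $\chi$ vanishing on the $v$-block and reading a single $\phi$-block; for the Alon--Mehrabian sets, $\phi_1$ is presumably a bijection, so this does not collapse either. The cleanest route is then a sweeping argument. Order the cosets of a suitable index-$d$ subgroup so that each generator moves the robber by at most one step in a cyclic order of cosets. Then a moving fibre of size $n/d$, advanced one coset per cop move, pushes the robber, who can never cross the fibre within one turn regardless of speed. Here $H$ is chosen so that $G/H\cong\ZZ_2$ times something with bounded displacement.

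Concretely, I would try the following. Take the homomorphism $G\to\ZZ_2$ given by the first coordinate, under which all generators are odd, so the graph is bipartite. Cover one full side with cops only fibre by fibre, and combine this with a sweep along the $v$-coordinate. The cop budget of $n/d$ should allow occupying all vertices of a coset of an index-$d$ subgroup and then moving them coherently. For a parity-sweep variant, the required property is that cops at $x$ can step to $x+g$ for a fixed $g$, so the whole coset translates in one move. Finally, combine with $c_{1,s}\le c_{1,\infty}$ and the lower bound $c_{1,s}=\Omega_s(n^{s/(s+1)})$ of Alon--Mehrabian to obtain both $\Theta_s$ statements. The constant $2^{1/(s+1)}$ is then just arithmetic from $n=2d^{s+1}$.

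The main obstacle is identifying the right index-$d$ subgroup together with a cyclic (or graded) ordering of its cosets in which every generator has displacement at most one. This depends on the exact algebraic form of the Alon--Mehrabian generator set, which I would have to recheck from \cite{AlonMehrabian} before committing. My first attempt is the subgroup cut out by the $\phi_s$-block coordinates, ordered via a Gray-code-like cyclic order of $\ZZ_2^k$.
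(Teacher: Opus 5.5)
There is a genuine gap. You never reach an upper-bound argument that works, and the sweeping route you finally commit to cannot work on this group.

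A barrier sweep needs two things: a cyclic order of cosets in which every generator step has bounded displacement, and the property that translating a barrier by a generator advances it monotonically. But $G$ has exponent two. Every translation is an involution, so repeatedly translating a barrier by a generator only toggles it between two positions. Moreover, every cyclic quotient of $G$ has order at most $2$, so a cyclic sweep can never beat the trivial bound $n$.

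A Gray-code ordering of $\ZZ_2^k$ does not help, because translations are not shifts in that order. Worse, for your $v$-projection the images of the generators are \emph{all} of $\ZZ_2^k$. The quotient graph is therefore complete, a single generator step jumps between any two cosets, and no fibre barrier confines the robber. For the $\phi_s$-block, the image of $S$ is the set of $(2s+1)$th powers in $\mathbb F_{2^k}$, which has at least $(d-1)/(2s+1)$ nonzero elements. Any cyclic ordering with displacement at most $Q$ then forces $Q=\Omega_s(d)$, so the barriers have size $\Omega_s(n)$. The parity idea of occupying a full side of the bipartition costs $n/2$ cops.

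The missing idea is one you actually wrote down and then discarded. You observed that for a fibre $C=\pi^{-1}(c)$ of the $v$-projection, every vertex has exactly one neighbour in $C$. You dismissed this because the robber ``can move to other fibres'', but the robber never gets that chance: the cops place first, the robber then picks a start, and the cops move first.

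Put one cop on each vertex of $D_0:=\ker\pi$, where $\pi(a_0,a_1,\dots,a_{s+1})=a_1$, so that $\pi(e_i)=x_i$ runs over all of $F$. A robber starting at $v\notin D_0$ has $\pi(v)=x_i$ for some $i$, so $v+e_i\in D_0$. Since $2e_i=0$, the cop at $v+e_i$ moves along $e_i$ to $v$ and captures immediately, whatever the robber's speed. Thus $D_0$ is a dominating set and $c_{1,\infty}(\Gam)\le|D_0|=n/d$. Equivalently, Proposition~\ref{prop:lifting} lifts the one-cop strategy on the complete quotient $\Cay(F,F\setminus\{0\})$. This is exactly the paper's proof.

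So for this family you want the generators to \emph{spread over} all cosets, not to collapse to few classes as a sweep requires. With this in place, your final step (the Alon--Mehrabian lower bound together with $c_{1,s}\le c_{1,\infty}$) completes the proof.
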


The proof of Theorem~\ref{thm:intro-alon} appears in
Section~\ref{sec:alon} and establishes the conjectured order for this
extremal Cayley family even when the robber has unbounded speed.
Theorem~\ref{thm:intro-bounded} does not follow from sparsity alone:
Mehrabian's isoperimetric lower bound shows that every bounded-degree
undirected expander family satisfies
$c_{1,\infty}\left(\Gamma\right)=\Theta\left(n\right)$, and a random
$d$-regular graph asymptotically almost surely has
$c_{1,\infty}\left(\Gamma\right)=\Theta\left(n\right)$ for each fixed
$d\ge3$ \cite[Theorem 3.3 and Corollary 4.8]{MehrabianFastRobberExpandersRandomGraphs}.
The sublinear estimate therefore depends essentially on the abelian Cayley
structure. For abelian Cayley graphs, the
ordinary $O\left(\sqrt n\right)$ bound is sharp up to constant factors by the
constructions of Hasiri and Shinkar \cite{HasiriShinkarMeynielExtremal}.
Beyond the bounded-degree range, if
$
  d \le \frac{\log n}{\kappa\log\log n},
$
then
$
  c_{1,\infty}\left(\Gamma\right)=O\!\left(\frac{n}{\left(\log n\right)^{\kappa}}\right),
$
with the exponent $2\kappa$ in the undirected case; see
Corollary~\ref{cor:logdeg}. Combining this sparse estimate with domination in
the complementary dense regime gives Theorem~\ref{thm:intro-degree-free}.

In the Alon--Mehrabian family every generator is an involution. Hence the
subgroup generated by the involutory generators equals $G$ and cannot be
used as a proper quotient subgroup. The proof of
Theorem~\ref{thm:intro-alon} instead uses a noncyclic coordinate quotient.

The rest of the paper is organized as follows. Section~\ref{sec:prelim} introduces the notation and recalls the ordinary-game results used later. Section~\ref{sec:basic-bounds} treats the equal-speed reduction and domination. Section~\ref{sec:quotient-method} develops quotient lifting, character selection, and sweeping, and proves the optimized upper bound. Section~\ref{sec:consequences} gives bounded-degree and degree-independent consequences and concludes with examples, lower bounds, and questions.

\section{Preliminaries}\label{sec:prelim}

We use the standard asymptotic notation as $n\to\infty$. All logarithms are
natural unless a base is explicitly indicated. For nonnegative functions $f$
and $g$, we write $f\left(n\right)=O\left(g\left(n\right)\right)$ if there
are constants $C>0$ and $n_0$ such that
$f\left(n\right)\le Cg\left(n\right)$ for all $n\ge n_0$, and
$f\left(n\right)=\Omega\left(g\left(n\right)\right)$ if
$g\left(n\right)=O\left(f\left(n\right)\right)$. We write
$f\left(n\right)=\Theta\left(g\left(n\right)\right)$ when both
$f\left(n\right)=O\left(g\left(n\right)\right)$ and
$f\left(n\right)=\Omega\left(g\left(n\right)\right)$. A subscript records the
parameters on which the implicit constants may depend; for example,
$O_D\left(g\left(n\right)\right)$ means that $C$ and $n_0$ may depend on
$D$, but not on $n$. Throughout the paper, $\Gam=\left(V,E\right)$ denotes a finite strongly connected digraph; an undirected graph is identified with the symmetric digraph obtained by replacing each edge by two opposite arcs. A
\emph{directed walk} is a vertex sequence $v_0,v_1,\dots,v_\ell$ such that
$v_{i-1}\to v_i$ is an arc for every $1\le i\le\ell$; vertices may be
repeated. A \emph{directed path} is a directed walk with no repeated vertices.
In either case, the length is $\ell$, and paths of length zero are allowed.

In the $\left(r,s\right)$ game, the cops choose their initial vertices first,
and the robber then chooses its initial vertex. Thereafter the cops move first in
each round. A cop of speed $r$ may remain at its current vertex or move along a directed path of
length at most $r$. A robber of speed $s$ may remain at its current vertex or move along a directed
path of length at most $s$, with the restriction that no vertex of that path,
including its terminal vertex, is occupied by a cop. Capture occurs as soon as
a cop and the robber occupy the same vertex; since the robber may not move onto
an occupied vertex, this can happen only on a move by the cops. We write
$c_{r,s}\left(\Gam\right)$ for the least number of speed-$r$ cops that
guarantee capture of a speed-$s$ robber. In the unbounded-speed game the robber
may traverse any finite cop-free directed path in one turn; the corresponding
parameter is denoted by $c_{1,\infty}\left(\Gam\right)$. Monotonicity in the
robber's speed gives
$
  c_{1,s}\left(\Gam\right)\le c_{1,\infty}\left(\Gam\right)
$
for every finite $s$.

If $G$ is a finite abelian group written additively and $S\subseteq G\setminus\left\{0\right\}$, then the Cayley digraph $\Cay\left(G,S\right)$ has vertex set $G$ and an arc $x\to x+g$ for every $x\in G$ and $g\in S$. It is in- and out-regular of degree $d:=\left|S\right|$. Since $G$ is finite, the submonoid generated by $S$ is the subgroup generated by $S$: if $g\in S$ has order $q$, then $-g=\left(q-1\right)g$ is a nonnegative sum of elements of $S$. Consequently, $\Cay\left(G,S\right)$ is strongly connected if and only if $\left\langle S\right\rangle=G$. We consider nontrivial groups, so $\left|G\right|\ge2$. When $S=-S$, we regard $\Cay\left(G,S\right)$ as an undirected Cayley graph.

For a finite group $Q$, its exponent $\operatorname{exp}\left(Q\right)$ is
the least common multiple of the orders of its elements. Define
$$
  T:=\left\{g\in S:2g=0\right\},\qquad t:=\left|T\right|,\qquad H:=\left\langle T\right\rangle.
$$
Let $r$ be the number of inverse-pair classes meeting $S\setminus T$:
$$
  r:=\left|\left\{\left\{g,-g\right\}:g\in S\setminus T\right\}\right|.
$$
Then
$$
  \frac{d-t}{2}\le r\le d-t.
$$
The lower bound is attained when $S\setminus T$ is symmetric, whereas the
upper bound is attained when no two distinct elements of $S\setminus T$ are
negatives of one another.

For an undirected graph $\Gam$ on $n$ vertices and a nonempty set $A\subseteq V\left(\Gam\right)$, let
$$
  \partial_v A:=N\left(A\right)\setminus A
$$
be its external vertex boundary. The vertex-isoperimetric number of $\Gam$ is
$$
  \iota_v\left(\Gam\right):=
  \min_{0<\left|A\right|\le \frac n2}\frac{\left|\partial_v A\right|}{\left|A\right|}.
$$

For a digraph $\Gam$ and a positive integer $q$, let $\Gam^{\left[q\right]}$ denote the digraph on $V\left(\Gam\right)$ with an arc $x\to y$ whenever $x\ne y$ and the directed distance from $x$ to $y$ in $\Gam$ is at most $q$. If $\Gam=\Cay\left(G,S\right)$, then
$
  \Gam^{\left[q\right]} = \Cay\left(G,S^{\left[q\right]}\right),
$
where $S^{\left[q\right]}$ is the set of all nonzero group elements expressible as a sum of at most $q$ elements of $S$.

\section{Graph-power and domination bounds}\label{sec:basic-bounds}

\subsection{An equal-speed reduction}\label{sec:symmetric}

When both players have the same speed, the fast game reduces to the ordinary game on a graph power.

\begin{proposition}\label{prop:symmetric-general}
For every finite digraph $\Gam$ and every $s\ge1$,
$
  c_{s,s}\left(\Gam\right)=c\left(\Gam^{\left[s\right]}\right).
$
\end{proposition}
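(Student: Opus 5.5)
We will prove the identity by showing that the $(s,s)$ game on $\Gam$ and the ordinary $(1,1)$ game on $\Gam^{[s]}$ are essentially the same game. For this we compare the legal move sets of the two players in the two games and check that they coincide, or that any discrepancy works in favour of the pursuing side in both directions.

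\emph{Cop moves.} A speed-$s$ cop in $\Gam$ may move from $x$ to any $y$ with directed distance $d_\Gam(x,y)\le s$. A unit-speed cop in $\Gam^{[s]}$ may move from $x$ to $x$ itself or along an arc $x\to y$. These sets are identical, since $x\to y$ is an arc of $\Gam^{[s]}$ exactly when $y\ne x$ and $d_\Gam(x,y)\le s$. Initial placements and capture (a cop occupying the robber's vertex) are also identical in both games.

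\emph{Robber moves.} In $\Gam$ the robber moves from $x$ to $y$ along a directed path of length at most $s$ avoiding all cop-occupied vertices. In $\Gam^{[s]}$ it moves to $y=x$ or to an out-neighbour $y$ that is not cop-occupied.

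The first inclusion is immediate. Every legal fast move in $\Gam$ yields $d_\Gam(x,y)\le s$, with $y$ cop-free, so it is a legal move in $\Gam^{[s]}$. Hence a cop strategy winning in $\Gam^{[s]}$ wins against the (more restricted) fast robber in $\Gam$, played verbatim. This gives $c_{s,s}(\Gam)\le c(\Gam^{[s]})$.

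\emph{Main obstacle: the reverse inequality.} In $\Gam^{[s]}$ the robber may jump to $y$ even if every short $x$--$y$ path in $\Gam$ passes through a cop. So the robber in $\Gam^{[s]}$ has seemingly more moves, and we need a cop-side argument. Suppose $k$ cops win the $(s,s)$ game on $\Gam$. We argue that the same strategy wins on $\Gam^{[s]}$: whenever the robber makes a $\Gam^{[s]}$-move $x\to y$ whose every path of length at most $s$ is blocked, some cop sits at a vertex $z$ on such a path. That gives $d_\Gam(z,y)\le s$, so on the cops' next move that cop jumps to $y$ and captures immediately.

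Thus, against any robber play in $\Gam^{[s]}$, the cops do the following:
\begin{itemize}
\item if the robber's last move was "blocked" in $\Gam$, they capture at once;
\item otherwise the move was a legal fast move in $\Gam$, and they follow the winning strategy there.
\end{itemize}
Since the cops' legal moves are identical in both games, this is a legitimate strategy in $\Gam^{[s]}$. It wins because either capture happens immediately or the play is a play of the $(s,s)$ game consistent with a winning strategy. The initial robber placement needs no care, as it is unrestricted in both games.

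This gives $c(\Gam^{[s]})\le c_{s,s}(\Gam)$ and hence equality. The only point needing care is the directed distance direction: the blocking cop $z$ lies on a path from $x$ to $y$, so the subpath from $z$ to $y$ has length at most $s$, which is precisely what the cop move requires.
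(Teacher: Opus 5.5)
Your proposal is correct and follows essentially the same route as the paper. Cop moves coincide, and every legal fast-robber move is a legal move in $\Gamma^{[s]}$, which gives $c_{s,s}(\Gamma)\le c(\Gamma^{[s]})$. For the reverse inequality, the cops simulate a winning $(s,s)$ strategy until the robber makes a jump $x\to y$ all of whose short paths are blocked; the blocking cop then reaches $y$ along the terminal subpath in a single $\Gamma^{[s]}$-move. The paper adds one detail that you use only implicitly and that is harmless: the blocking cop lies at an internal vertex of the path, since neither $x$ nor $y$ is occupied.
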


\begin{proof}
A move in $\Gam^{\left[s\right]}$ from $u$ to $v$ corresponds to a directed $u$--$v$ path of length at most $s$ in $\Gam$. Thus every cop move in $\Gam^{\left[s\right]}$ can be realized in the speed-$s$ game, and every legal robber move in the speed-$s$ game induces a legal move in $\Gam^{\left[s\right]}$. Consequently,
$
  c_{s,s}\left(\Gam\right)\le c\left(\Gam^{\left[s\right]}\right).
$

For the reverse inequality, let the cops simulate a winning speed-$s$ strategy
on $\Gam$ as long as every robber move in $\Gam^{\left[s\right]}$ is induced
by a cop-free directed path of length at most $s$ in $\Gam$. Suppose that the
robber first uses an additional arc $u\to v$ for which every directed
$u$--$v$ path of length at most $s$ meets the current cop set. The cops then
terminate the simulation and choose one such path
$u=w_0,w_1,\dots,w_\ell=v$ with $\ell\le s$. The robber occupies $u$, and its
move in $\Gam^{\left[s\right]}$ is legal, so neither $u$ nor $v$ is occupied;
hence some internal vertex $w=w_i$ with $1\le i\le\ell-1$ is occupied by a cop. The
terminal segment $w_i,\dots,w_\ell$ has length at most $s$ and $w\ne v$, so
$w\to v$ is an arc of $\Gam^{\left[s\right]}$ and that cop captures the robber
on the cops' next move. Hence
$
  c\left(\Gam^{\left[s\right]}\right)\le c_{s,s}\left(\Gam\right).
$
The two inequalities prove the identity, which is the directed analogue of
Mehrabian's argument for graphs.
\end{proof}

\begin{proof}[Proof of Theorem~\ref{thm:intro-equal}]
By Proposition~\ref{prop:symmetric-general},
$$
  c_{s,s}\left(\Gam\right)=c\left(\Gam^{\left[s\right]}\right).
$$
The power $\Gam^{\left[s\right]}$ is the strongly connected abelian Cayley digraph
$
  \Cay\left(G,S^{\left[s\right]}\right).
$
Applying the directed bounds of Bradshaw--Hosseini--Turcotte
\cite[Proposition 4.1 and Theorem 4.8]{BradshawHosseiniTurcotteCopsRobbersDirUnditAbelianCayley} gives
$$
  c\left(\Gam^{\left[s\right]}\right)
  \le
  \min\!\left\{
    \sqrt{\frac{2}{e\left(\sqrt2-1\right)}}\sqrt n+2,\,
    \left|S^{\left[s\right]}\right|+1
  \right\}.
$$
Since $G$ is abelian, every element
$x\in S^{\left[s\right]}\cup\left\{0\right\}$ has a representation
$$
  x=\sum_{u\in S}m_u u,
  \qquad
  m_u\in\ZZ_{\ge0},\quad \sum_{u\in S}m_u\le s.
$$
Introduce the slack variable
$m_0:=s-\sum_{u\in S}m_u$. The number of nonnegative integer solutions of
$
  m_0+\sum_{u\in S}m_u=s
$
is $\binom{d+s}{s}$. Different solutions may represent the same group
element, so this gives an upper bound; after excluding the zero element,
$
  \left|S^{\left[s\right]}\right|\le\binom{d+s}{s}-1.
$

If $S=-S$, then $S^{\left[s\right]}=-S^{\left[s\right]}$, so $\Gam^{\left[s\right]}$ is undirected. The sharper undirected order bound of Bradshaw--Hosseini--Turcotte \cite[Theorem 3.8]{BradshawHosseiniTurcotteCopsRobbersDirUnditAbelianCayley} and Frankl's bound \cite{FranklPursuitGameOnCayleyGraph} give
$$
  c\left(\Gam^{\left[s\right]}\right)
  \le
  \min\!\left\{
    \frac{1}{\sqrt{e\left(\sqrt2-1\right)}}\sqrt n+\frac72,\,
    \left\lceil\frac{\left|S^{\left[s\right]}\right|+1}{2}\right\rceil
  \right\},
$$
and the result follows from the estimates above.
\end{proof}

\subsection{A domination bound}\label{sec:domination}

The following domination argument is standard. Mehrabian applied it to the
unbounded-speed game \cite[Section 4]{MehrabianFastRobberExpandersRandomGraphs}
and combined it with the domination estimate of Arnautov \cite{Arnautov1974}
and Payan \cite{Payan1975}; see also Alon--Spencer
\cite{AlonSpencerProbMethod} and
\cite[Theorem 4.3]{MehrabianFastRobberExpandersRandomGraphs}.

\begin{proposition}\label{thm:domination}
Let $\Gam$ be a digraph on $n$ vertices with minimum in-degree at least $d\ge1$. Then, for every finite $s$,
$$
  c_{1,s}\left(\Gam\right)\le c_{1,\infty}\left(\Gam\right)
  \le \gamma^+\left(\Gam\right)
  \le \frac{n\left(1+\log\left(d+1\right)\right)}{d+1},
$$
where $\gamma^+\left(\Gam\right)$ is the minimum size of an out-dominating set: a set $D$ such that every vertex outside $D$ has an in-neighbour in $D$.
\end{proposition}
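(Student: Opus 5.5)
The plan is to establish the three inequalities in the chain one at a time. The first, $c_{1,s}\left(\Gam\right)\le c_{1,\infty}\left(\Gam\right)$, is just monotonicity in the robber's speed, as noted in Section~\ref{sec:prelim}. A speed-$s$ robber is a restricted unbounded-speed robber, so any cop strategy that defeats every unbounded-speed robber also defeats every speed-$s$ robber.

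For the second inequality, $c_{1,\infty}\left(\Gam\right)\le\gamma^+\left(\Gam\right)$, fix a minimum out-dominating set $D$ and place one cop on each vertex of $D$.
\begin{itemize}[leftmargin=*]
\item After the cops' initial placement, the robber must choose an unoccupied vertex $v\notin D$.
\item Since $D$ is out-dominating, $v$ has an in-neighbour $u\in D$, so $u\to v$ is an arc. The cop at $u$ can therefore step onto $v$ on the cops' first move.
\item Because the cops move first in each round, the robber gets no chance to flee. Capture happens on the first cop move, whatever the robber's speed.
\end{itemize}
The only subtlety is the turn order. The robber places itself after the cops and then the cops move, so the robber never moves before being caught. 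This is why the argument is insensitive to unbounded speed.

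The third inequality, $\gamma^+\left(\Gam\right)\le \frac{n\left(1+\log\left(d+1\right)\right)}{d+1}$, is the Arnautov--Payan / Alon--Spencer probabilistic bound, adapted to the directed setting.
\begin{itemize}[leftmargin=*]
\item Let $p=\frac{\log(d+1)}{d+1}\in(0,1)$, and include each vertex in a random set $X$ independently with probability $p$.
\item Let $Y$ be the set of vertices $v\notin X$ none of whose in-neighbours lies in $X$.
\item Then $X\cup Y$ is out-dominating: a vertex outside $X\cup Y$ is not in $X$ and, not being in $Y$, has an in-neighbour in $X$.
\item A vertex $v$ lies in $Y$ only if $v$ and at least $d$ in-neighbours all avoid $X$, and these are $d+1$ distinct vertices. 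Hence $\Pr\left[v\in Y\right]\le(1-p)^{d+1}\le e^{-p(d+1)}=\frac{1}{d+1}$.
\item By linearity of expectation,
$$
\E\left|X\cup Y\right|\le np+\frac{n}{d+1}=\frac{n\left(1+\log\left(d+1\right)\right)}{d+1}.
$$
So some choice of $X$ attains at most this size.
\end{itemize}
The one point to check is that the bound uses in-neighbours, matching the definition of out-domination and the hypothesis on minimum in-degree.

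I expect no step to be a real obstacle. The most care is needed in the second inequality, to make sure the capture happens before any robber move, and in the third, to make sure directions are handled consistently.
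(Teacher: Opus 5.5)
Your proposal is correct and follows the paper's proof essentially step for step. You place cops on an out-dominating set to capture on the first cop move, then run the random construction with $p=\frac{\log(d+1)}{d+1}$, using the vertices whose closed in-neighbourhood misses $X$ (your $Y$, the paper's $U$). The only addition is that you spell out the monotonicity inequality $c_{1,s}\le c_{1,\infty}$, which the paper takes from its preliminaries.
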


\begin{proof}
Let $D$ be an out-dominating set and place one cop on each vertex of $D$. If the robber starts at $v\notin D$, then some cop is at an in-neighbour of $v$ and captures it on the first move by the cops. Hence
$
  c_{1,\infty}\left(\Gam\right)\le\gamma^+\left(\Gam\right).
$

To bound $\gamma^+\left(\Gam\right)$, choose each vertex independently with probability
$
  p:=\frac{\log\left(d+1\right)}{d+1}.
$
Let $X$ be the set of chosen vertices and let
$$
  U:=\left\{v\in V\left(\Gam\right):N^-\left[v\right]\cap X=\varnothing\right\},
$$
where $N^-\left[v\right]$ is the closed in-neighbourhood of $v$. Since
$\left|N^-\left[v\right]\right|\ge d+1$,
$$
  \Pr\left(v\in U\right)\le\left(1-p\right)^{d+1}
  \le e^{-p\left(d+1\right)}
  =\frac1{d+1}.
$$
Therefore
$$
  \E\left(\left|X\right|+\left|U\right|\right)
  \le
  \frac{n\log\left(d+1\right)}{d+1}+\frac n{d+1}
  =
  \frac{n\left(1+\log\left(d+1\right)\right)}{d+1}.
$$
For every realization, $X\cup U$ is out-dominating. Hence there exists a realization of the asserted size.
\end{proof}

\subsection{The Alon--Mehrabian examples}\label{sec:alon}

\begin{proof}[Proof of Theorem~\ref{thm:intro-alon}]
We use the following construction of Alon and Mehrabian. Let
$$
  F:=\mathbb F_{2^k},\qquad d:=\left|F\right|=2^k,
$$
and enumerate the elements of $F$ as $x_1,\dots,x_d$. After identifying the additive group of $F$ with $\ZZ_2^k$, set
$$
  G:=\ZZ_2\times F^{s+1}
  \cong\ZZ_2^{\,1+k\left(s+1\right)}.
$$
For $1\le i\le d$, define
$$
  e_i:=\left(1,x_i,x_i^3,\dots,x_i^{2s+1}\right)\in G,
$$
where the powers are taken in $F$, and define
$$
  S:=\left\{e_1,\dots,e_d\right\},
  \qquad
  \Gam:=\Cay\left(G,S\right).
$$
Thus $\Gam$ has degree $d$ and
$$
  n=\left|G\right|=2^{\,1+k\left(s+1\right)}=2d^{s+1}.
$$
For each fixed $s$ and all sufficiently large $k$, Alon and Mehrabian proved
\cite[Lemma 2 and proof of Theorem 1]{AlonMehrabian} that $\Gam$ is connected
and that its speed-$s$ cop number satisfies
$$
  c_{1,s}\left(\Gam\right)=\Omega_s\left(d^s\right)
  =\Omega_s\left(n^{\frac{s}{s+1}}\right).
$$

For the upper bound, which also applies to an unbounded-speed robber, let
$$
  \phi:G\longrightarrow F,\qquad
  \phi\left(a_0,a_1,\dots,a_{s+1}\right)=a_1,
$$
be projection onto the first $F$-coordinate, and define
$$
  D_0:=\ker\phi.
$$
Since $\phi$ is onto,
$$
  \left|D_0\right|=\frac{\left|G\right|}{\left|F\right|}=\frac nd.
$$
Place one cop at each vertex of $D_0$. An initial robber position in $D_0$ is
occupied by a cop. Otherwise the robber starts at a vertex $v\notin D_0$, and
$\phi\left(v\right)=x_i$ for some $i$. Since
$\phi\left(e_i\right)=x_i$ and the group has characteristic two,
$$
  \phi\left(v+e_i\right)=\phi\left(v\right)+\phi\left(e_i\right)=x_i+x_i=0.
$$
Hence $v+e_i\in D_0$. The cop at $v+e_i$ moves along the edge generated by $e_i$ to
$$
  \left(v+e_i\right)+e_i=v
$$
and captures the robber on the first move by the cops. Therefore
$$
  c_{1,\infty}\left(\Gam\right)\le \left|D_0\right|
  =\frac nd
  =2d^s
  =2^{\frac{1}{s+1}}n^{\frac{s}{s+1}}.
$$
Combining this estimate with the Alon--Mehrabian lower bound and
$c_{1,s}\left(\Gam\right)\le c_{1,\infty}\left(\Gam\right)$ proves both
asymptotic statements.
\end{proof}

\section{Quotient-sweeping}\label{sec:quotient-method}

\subsection{Overview of the method}
The quotient method addresses the sparse regime. A winning quotient strategy
with $k$ cops lifts to the original graph by occupying the corresponding full
fibres and requires $k$ times the cardinality of a fibre.

\begin{proposition}[Quotient lifting]\label{prop:lifting}
Let $\Gam=\Cay\left(G,S\right)$ be a strongly connected abelian Cayley
digraph, and let $\phi:G\longrightarrow P$ be a surjective homomorphism onto
a finite group. Define
$
 \overline\Gam:=\Cay\left(P,\phi\left(S\right)\setminus\left\{0\right\}\right).
$
Then
$$
  c_{1,\infty}\left(\Gam\right)
  \le \left|\ker\phi\right|\,c_{1,\infty}\left(\overline\Gam\right).
$$
The same inequality holds with $c_{1,s}$ in place of $c_{1,\infty}$ for every finite $s$.
\end{proposition}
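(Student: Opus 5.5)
Let $k:=c_{1,\infty}(\overline\Gam)$ and fix a winning strategy for $k$ cops on $\overline\Gam$. Write $K:=\ker\phi$. The lifted strategy uses $k|K|$ cops, organized into $k$ teams of size $|K|$. Team $j$ will at all times occupy an entire fibre $\phi^{-1}(p_j)=x_j+K$, where $p_j\in P$ is the position of the $j$-th quotient cop. The robber at $v\in G$ is shadowed by the quotient robber at $\phi(v)$.

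\textbf{Initial placement and cop moves.} Initially each team fills the fibre over the initial quotient position, which is possible since $|\phi^{-1}(p)|=|K|$. Suppose the quotient strategy moves cop $j$ from $p$ to $p'$. If $p'=p$, the team stays. Otherwise $p'=p+\phi(g)$ for some $g\in S$ with $\phi(g)\neq0$. Every cop at $y\in p$'s fibre then moves along the arc $y\to y+g$. The map $y\mapsto y+g$ is a bijection from $\phi^{-1}(p)$ onto $\phi^{-1}(p')$, so after the move the team again fills the full fibre over $p'$.

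\textbf{Robber moves.} Suppose the robber traverses a cop-free directed path $v_0\to v_1\to\dots\to v_\ell$ in $\Gam$. Its image $\phi(v_0),\dots,\phi(v_\ell)$ is a directed walk in $\overline\Gam$, where steps along generators in $K$ become stationary steps. No $\phi(v_i)$ is a quotient cop position: otherwise the whole fibre, including $v_i$, would be occupied. Deleting repeated vertices and closed subwalks turns this walk into a cop-free directed path in $\overline\Gam$ from $\phi(v_0)$ to $\phi(v_\ell)$. Its length is at most $\ell$, so the speed bound is respected, which gives the $c_{1,s}$ version as well. It is therefore a legal quotient robber move, and the invariant ``quotient robber $=\phi(\text{robber})$'' is maintained. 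The same applies to the robber's initial placement: if $\phi(v_0)$ were a cop position, the robber would start on an occupied vertex.

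\textbf{Capture.} Since the quotient strategy is winning, at some cop move a quotient cop reaches $\phi(v)$, where $v$ is the robber's current vertex. At that moment the corresponding team occupies all of $\phi^{-1}(\phi(v))\ni v$, so the robber is captured in $\Gam$. This gives $c_{1,\infty}(\Gam)\le |K|\,k$, and the identical argument with speed bound $s$ gives the $c_{1,s}$ statement.

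The only delicate points are the walk-to-path reduction and the fact that quotient arcs come from generators with nonzero image; both are handled above. Note also that $\overline\Gam$ is strongly connected because $\langle\phi(S)\rangle=P$, so its cop number is defined; if $P$ is trivial the bound $|K|=n$ is immediate.
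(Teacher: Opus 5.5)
Your proposal is correct and follows essentially the same argument as the paper: each quotient cop is lifted to a full fibre, fibres are translated by a generator $g$ with $\phi(g)\ne0$, and robber paths are projected to walks in $\overline\Gam$ that are pruned to cop-free quotient paths of no greater length. Your remarks on the robber's initial placement, the strong connectivity of $\overline\Gam$, and the trivial-quotient case are harmless additions that the paper leaves implicit.
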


\begin{proof}
For each cop in a winning quotient strategy, occupy the full fibre above its
position. If a quotient cop moves by $\phi\left(g\right)$,
with $g\in S$, translate every cop in its fibre by $g$; this maps the full
fibre bijectively onto the required new fibre. A robber path in $\Gam$
initially projects to a directed walk in $\overline\Gam$, after consecutive
equal images arising from generators in $\ker\phi$ are deleted. This walk has
no greater length than the original path and avoids every quotient vertex
occupied by a cop because its entire fibre is occupied. Repeatedly delete the
closed subwalk between two occurrences of the same quotient vertex. The
resulting directed path has the same endpoints, no greater length, and still
avoids the quotient cop set. It is therefore a legal quotient robber path; in
the finite-speed game its length is at most $s$, and in the unbounded-speed
game it remains finite. When the quotient strategy moves a cop onto the
robber's image, the lifted cops occupy the robber's entire fibre and capture
it. The lifted strategy uses
$\left|\ker\phi\right|$ times as many cops as the quotient strategy.
\end{proof}

\begin{remark}
The constructions used in Proposition~\ref{prop:sweep} and
Theorem~\ref{thm:intro-alon} are both applications of
Proposition~\ref{prop:lifting}. Proposition~\ref{prop:sweep} lifts, through the
cyclic quotient $\pi:G\to\ZZ_m$, the quotient strategy that occupies two
intervals of $Q$ consecutive residues in
$\Cay\left(\ZZ_m,\pi\left(S\right)\setminus\left\{0\right\}\right)$. This
strategy uses $2Q$ quotient cops, and each fibre has cardinality
$\left|\ker\pi\right|=\frac nm$. Similarly, the proof of
Theorem~\ref{thm:intro-alon} lifts, through $\phi:G\to F$, a winning strategy
for one cop on
$\Cay\left(F,F\setminus\left\{0\right\}\right)\cong K_d$, whose fibres have
cardinality $\left|\ker\phi\right|=\frac nd$. Both lifting arguments are
stated explicitly to describe the corresponding fibre configurations.
\end{remark}

For the character-theoretic application, we construct a cyclic quotient
$$
  \pi:G\to\mathbb Z_m
$$
in which every generator has bounded cyclic displacement. The full preimages of sufficiently
long intervals in $\mathbb Z_m$ then form barriers: a legal robber path cannot
cross such a barrier without entering an occupied fibre. One barrier remains
stationary while the other moves in a generator direction, confining the
robber to a shrinking interval.

Simultaneous control of all generator images follows from a character-theoretic
pigeonhole argument. For any subgroup $K<G$, applying this argument to
$$
  \widehat{G/K}=\operatorname{Hom}\left(G/K,\mathbb R/\mathbb Z\right)
$$
yields a nontrivial character whose values on the generators are uniformly
close to $0$. This character-selection argument originates with
Friedman--Murty--Tillich
\cite[Section 2]{FriedmanMurtyTillichSpectralAbelianCayley}, who used it to
bound the second eigenvalue. Here, rather than passing to a Rayleigh quotient,
we retain the image of the character,
which supplies the required cyclic quotient. The argument applies to $c_{1,\infty}$ because the barriers depend only on the
largest single-step displacement in the quotient, not on the total length of a
robber move. The pursuit is thereby reduced to a monotone
interval-contraction process on a cycle.

\subsection{Character selection}\label{sec:character}
We construct a cyclic quotient of $G$ in which every generator has bounded
cyclic displacement.

\begin{proposition}\label{prop:character}
Let $\Gam=\Cay\left(G,S\right)$ be a strongly connected abelian Cayley digraph, and let $K<G$ be a proper subgroup. Define
$$
  \overline G:=G/K,
  \qquad
  r_K:=\left|\left\{\left\{\overline g,-\overline g\right\}:g\in S,\,
  \overline g\ne0\text{ in }G/K\right\}\right|,
$$
where $\overline g:=g+K$. Define $\overline n:=\left|\overline G\right|$ and set
$$
  N_K:=\floor{\left(\overline n-1\right)^{\frac{1}{r_K}}}.
$$
Then there exist an integer $m>N_K$ and a surjective homomorphism
$
  \pi:G\to\ZZ_m
$
whose kernel contains $K$, such that for every $g\in S$, the least-absolute-value representative $a\left(g\right)\in\ZZ$ of $\pi\left(g\right)$ satisfies
$$
  \left|a\left(g\right)\right|\le\ceil{\frac m{N_K}}-1.
$$
Moreover, at least one generator has nonzero image under $\pi$.
\end{proposition}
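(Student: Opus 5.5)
The plan is a Dirichlet-type pigeonhole argument on the dual group $\widehat{\overline G}=\operatorname{Hom}\left(\overline G,\RR/\ZZ\right)$, following Friedman--Murty--Tillich, but keeping the image of the selected character as the cyclic quotient.

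First I record the degenerate cases. Since $\left\langle S\right\rangle=G$ and $K$ is proper, not every generator lies in $K$, so $r_K\ge1$. Since $\overline n\ge2$, we get $N:=N_K\ge1$, and by definition $N^{r_K}\le\overline n-1<\overline n$.

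Next I set up the pigeonhole. Choose representatives $g_1,\dots,g_{r_K}\in S$, one for each inverse-pair class $\left\{\overline g,-\overline g\right\}$ with $\overline g\ne0$. To each character $\chi\in\widehat{\overline G}$ I assign the point $\left(\chi\left(\overline g_1\right),\dots,\chi\left(\overline g_{r_K}\right)\right)\in\left(\RR/\ZZ\right)^{r_K}$. I partition the torus into the $N^{r_K}$ half-open boxes $\prod_j\left[\frac{i_j}{N},\frac{i_j+1}{N}\right)$. Since $\left|\widehat{\overline G}\right|=\overline n>N^{r_K}$, two distinct characters $\chi\ne\chi'$ land in the same box. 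Then $\psi:=\chi-\chi'$ is a nontrivial character with $\norm{\psi\left(\overline g_j\right)}_{\RR/\ZZ}<\frac1N$ for every $j$. This extends to every $g\in S$:
\begin{itemize}
  \item if $\overline g=0$, then $\psi\left(\overline g\right)=0$;
  \item if $\overline g\ne0$, then $\overline g=\pm\overline g_j$ for some $j$, and $\psi\left(-x\right)=-\psi\left(x\right)$ preserves the distance to $0$.
\end{itemize}

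Now I define the cyclic quotient. The image of $\psi$ is a finite subgroup of $\RR/\ZZ$, hence equals $\frac1m\ZZ/\ZZ$ for some $m\ge2$, because $\psi$ is nontrivial. I set $\pi\left(x\right):=m\,\psi\left(x+K\right)\in\ZZ_m$. This is a surjective homomorphism $G\to\ZZ_m$ whose kernel contains $K$. For $g\in S$, the least-absolute-value representative satisfies $\frac{\left|a\left(g\right)\right|}{m}=\norm{\psi\left(\overline g\right)}<\frac1N$. Hence $\left|a\left(g\right)\right|<\frac mN$, and since $a\left(g\right)$ is an integer, $\left|a\left(g\right)\right|\le\ceil{\frac mN}-1$.

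Finally I check the last two claims. Since $\overline G$ is generated by $\left\{\overline g:g\in S\right\}$ and $\psi\ne0$, some generator has $\pi\left(g\right)\ne0$, which is the \emph{moreover} clause. For that generator, $1\le\left|a\left(g\right)\right|\le\ceil{\frac mN}-1$, so $\ceil{\frac mN}\ge2$, which forces $m>N$.

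The main point needing care is the strictness of the pigeonhole inequality. The half-open boxes give strict inequality $<\frac1N$ for the difference, and that is exactly what is needed for both the bound $\ceil{\frac mN}-1$ and the conclusion $m>N$. Also, $N^{r_K}\le\overline n-1$ must be used rather than $\le\overline n$, so that there are strictly more characters than boxes.
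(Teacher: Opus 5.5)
Your proposal is correct and follows the paper's proof step for step: the same pigeonhole over $N_K^{r_K}$ half-open boxes applied to the evaluation map on $\widehat{\overline G}$, the same passage to the image $\frac1m\ZZ/\ZZ\cong\ZZ_m$ of the difference character, and the same deduction of $m>N_K$ from a generator with nonzero image. One cosmetic point: $\pi(x):=m\,\psi(x+K)$ must be read through the identification $\frac km+\ZZ\mapsto k+m\ZZ$, not as multiplication in $\RR/\ZZ$, where it would give $0$.
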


\begin{proof}
Choose a generator image $\overline g_i$ from each nonzero inverse-pair class in $\overline G$, say $\overline g_1,\dots,\overline g_{r_K}$. Every generator image is $0$ or one of the $\overline g_i$ or its negative. Since $S$ generates $G$, the elements $\overline g_1,\dots,\overline g_{r_K}$ generate $\overline G$ as a group. In particular, $r_K\ge1$.

Let
$$
  \widehat{\overline G}:=\operatorname{Hom}\left(\overline G,\RR/\ZZ\right).
$$
For $x\in\RR/\ZZ$, write $\norm{x}$ for its distance from $0$; equivalently, if $\widetilde x\in\RR$ is any representative of $x$, then
$$
  \norm{x}:=\min_{k\in\ZZ}\left|\widetilde x-k\right|.
$$
Under the isomorphism
$$
  \RR/\ZZ\longrightarrow\left\{z\in\mathbb C:\left|z\right|=1\right\},
  \qquad x\longmapsto e^{2\pi i x},
$$
the elements of $\widehat{\overline G}$ correspond to the one-dimensional complex representations of $\overline G$. The standard character theorem for finite abelian groups states that there are exactly $\left|\overline G\right|$ such representations \cite[Corollary 11(1), Section 18.2]{DummitFooteAbstractAlgebra}; see also \cite[Section 4.1]{TaoVuAdditiveCombinatorics} for the formulation in terms of finite Fourier analysis and the dual group. Therefore
$$
  \left|\widehat{\overline G}\right|=\left|\overline G\right|=\overline n.
$$
Moreover, $N_K^{r_K}\le \overline n-1$, and hence $\overline n>N_K^{r_K}$.

Represent each copy of $\RR/\ZZ$ by the half-open fundamental domain $0\le x<1$, divide it into $N_K$ half-open intervals of length $\frac{1}{N_K}$, and take Cartesian products. This partitions $\left(\RR/\ZZ\right)^{r_K}$ into $N_K^{r_K}$ half-open cubes. Now consider the evaluation map
$$
  \Phi:\widehat{\overline G}\to\left(\RR/\ZZ\right)^{r_K},
  \qquad
  \Phi\left(\chi\right)=
  \left(\chi\left(\overline g_1\right),\dots,\chi\left(\overline g_{r_K}\right)\right).
$$
There are $\overline n$ characters but only $N_K^{r_K}$ cubes, so the pigeonhole principle gives two distinct characters $\chi_1,\chi_2$ whose evaluation vectors lie in the same cube. Set
$$
  \chi:=\chi_1-\chi_2.
$$
Because $\chi_1\ne\chi_2$, their difference $\chi$ is a nonzero character. For each $i$, the representatives of $\chi_1\left(\overline g_i\right)$ and $\chi_2\left(\overline g_i\right)$ lie in the same interval of length $\frac{1}{N_K}$. Their difference therefore has distance less than $\frac{1}{N_K}$ from $0$ in $\RR/\ZZ$. Thus
$$
  \norm{\chi\left(\overline g_i\right)}<\frac1{N_K}
  \qquad\left(1\le i\le r_K\right).
$$

Let $\chi\left(\overline G\right)$ have order $m$. The unique subgroup of
$\RR/\ZZ$ of order $m$ is
$$
  \frac{1}{m}\ZZ/\ZZ.
$$
This subgroup is canonically isomorphic to $\ZZ/m\ZZ$ via
$$
  k+m\ZZ\longmapsto \frac{k}{m}+\ZZ.
$$
Consequently, for every $\overline x\in\overline G$, there is a unique residue
$b\left(\overline x\right)\in\ZZ_m$ such that
$$
  \chi\left(\overline x\right)
  =\frac{j}{m}+\ZZ
$$
for any integer representative $j$ of $b\left(\overline x\right)$.
The map $b:\overline G\to\ZZ_m$ is a surjective homomorphism. Composing it
with the quotient map $G\to\overline G$ gives a surjective homomorphism
$$
  \pi:G\to\ZZ_m
$$
whose kernel contains $K$. For $g\in S$, let $a\left(g\right)$ be the
least-absolute-value integer representative of $\pi\left(g\right)$. Since
$\overline g$ is zero or belongs, up to sign, to one of the chosen
inverse-pair representatives, the preceding character estimate gives
$$
  \frac{\left|a\left(g\right)\right|}{m}
  =\norm{\chi\left(\overline g\right)}
  <\frac{1}{N_K}.
$$
Hence $\left|a\left(g\right)\right|<\frac{m}{N_K}$ for every $g\in S$.
Since $S$ generates $G$ and $\pi$ is nontrivial, at least one generator has nonzero image. Hence $1<\frac{m}{N_K}$, so $m>N_K$, and integrality gives
$$
  \left|a\left(g\right)\right|\le\ceil{\frac m{N_K}}-1.
$$
\end{proof}

\begin{remark}
The pigeonhole step is a simultaneous-approximation argument that produces a
cyclic quotient in which every generator has bounded cyclic displacement.
Varying $K$ can
reduce the number $r_K$ of generator-image classes while retaining a large
quotient. The subgroup $H$ generated by the involutory generators provides a
uniform choice when $H<G$, although it need not be optimal.
\end{remark}

\begin{remark}\label{rem:rH}
For the choice $K=H$, one has
$$
  r_H\le\left|S\right|-\left|T\right|,
$$
and if $S=-S$, then
$$
  r_H\le\frac{\left|S\right|-\left|T\right|}{2},
$$
since the non-involutory generators then form inverse pairs, whose images in
$G/H$ remain inverse to one another. Only these upper bounds are used below,
and a smaller value of $r_H$ only strengthens
Theorem~\ref{thm:intro-quotient}. Both inequalities may be strict, because
distinct generators may share an image in $G/H$. For instance, let
$$
  G=\ZZ_4\times\ZZ_2,\qquad
  S=\left\{\left(0,1\right),\left(1,0\right),\left(3,0\right),
  \left(1,1\right),\left(3,1\right)\right\}.
$$
Then $S=-S$, $T=\left\{\left(0,1\right)\right\}$,
$H=\left\{0\right\}\times\ZZ_2$, and $G/H\cong\ZZ_4$, in which the four
non-involutory generators have images $1,3,1,3$. Hence $r_H=1$, whereas
$\frac{\left|S\right|-\left|T\right|}{2}=2$. Thus symmetry reduces the general
upper bound on $r_H$ by a factor of two, which yields the stronger uniform
undirected exponent; identifications in the quotient may reduce $r_H$
further.
\end{remark}

\subsection{Sweeping in a cyclic quotient}\label{sec:sweep}

Mehrabian lifted quotient strategies by replacing each quotient cop with a
full fibre of cops \cite[Theorem 5.1]{MehrabianFastRobberExpandersRandomGraphs}.
His application concerned Cartesian products, which carry a canonical
projection. Here a cyclic quotient is constructed for abelian Cayley digraphs
without an assumed product decomposition. The resulting pursuit argument is
the following.

\begin{proposition}\label{prop:sweep}
Let $\Gam=\Cay\left(G,S\right)$ be a strongly connected abelian Cayley digraph on $n$ vertices, and suppose there is a surjective homomorphism
$
  \pi:G\to\ZZ_m.
$
For each $g\in S$, choose an integer $a\left(g\right)\in\ZZ$ representing the residue $\pi\left(g\right)$, with $\left|a\left(g\right)\right|$ as small as possible. Assume that
$$
  \left|a\left(g\right)\right|\le Q
$$
for some integer $Q\ge1$, and that
$$
  m\ge2Q+2.
$$
Then, for every finite $s$,
$$
  c_{1,s}\left(\Gam\right)\le c_{1,\infty}\left(\Gam\right)
  \le\frac{2Qn}{m}.
$$
\end{proposition}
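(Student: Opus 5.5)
By Proposition~\ref{prop:lifting}, applied to $\pi$ with $|\ker\pi|=\frac nm$, it suffices to show that $2Q$ cops capture an unbounded-speed robber on the quotient digraph $\overline\Gam:=\Cay\left(\ZZ_m,\pi\left(S\right)\setminus\left\{0\right\}\right)$. In $\overline\Gam$ every arc joins $x$ to $x+a$ for some integer $a$ with $1\le|a|\le Q$. Since $\pi$ is surjective and $\langle S\rangle=G$, the residues $a(g)$ generate $\ZZ_m$. The basic observation is the barrier property: if a set $B$ of $Q$ consecutive residues is fully occupied, then no single arc can jump over $B$, because a jump has length at most $Q$. Hence a cop-free robber path, of whatever length, cannot pass from one side of $B$ to the other along the cycle. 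Two such barriers cut $\ZZ_m$ into two arcs, and the robber stays in whichever arc it occupies. The hypothesis $m\ge2Q+2$ guarantees that two disjoint barriers can be placed with room to spare, so the robber has somewhere to be confined.

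The strategy I would use works as follows. Place one stationary barrier $B_0=\{0,1,\dots,Q-1\}$ and one moving barrier $B_1$, also of $Q$ consecutive residues. The robber then sits in an interval $I$ of free residues lying between the barriers. The goal is to advance $B_1$ into $I$ step by step while keeping the separation intact, so that $|I|$ strictly decreases until it reaches $0$, which means capture. Because each cop moves along a single arc per turn, the whole block $B_1$ can be translated only by generator images. The cleanest way to do this is to move every cop of $B_1$ by a common generator image $a$ with $1\le a\le Q$ pointing into $I$.

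The main obstacle is that translating $B_1$ by $a$ uncovers residues behind it, and during the robber's next move the barrier must still be a contiguous block of length $Q$. A rigid translation of a length-$Q$ interval by $a$ remains a length-$Q$ interval, so separation persists after the cops' move. The residues vacated behind $B_1$ lie on the side away from the robber, so the robber cannot reach them. This works provided there is some generator image pointing toward the robber's side. Such an image always exists: if all nonzero images had the same sign, one can flip orientation, since a positive step $a$ moves the block in the positive direction. I would fix the orientation so that $I$ lies on the positive side of $B_1$ and pick any generator $g$ with $1\le a(g)\le Q$; if all nonzero images are negative, I would reflect the labelling of $\ZZ_m$.

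Each advance shrinks $I$ by $a\ge1$, so after finitely many rounds the moving block overlaps the robber's residue, and the corresponding lifted fibre cops capture it. The count in $\overline\Gam$ is $2Q$ cops. Lifting through Proposition~\ref{prop:lifting} then gives $c_{1,\infty}(\Gam)\le 2Q\cdot\frac nm$, and $c_{1,s}\le c_{1,\infty}$ completes the proof. I would double-check the initial placement: the robber chooses its start after the cops, so if it starts inside a barrier it is captured at once, and otherwise it lies in one of the two complementary arcs. If it lies in the arc "behind" $B_1$, I would simply swap the roles of the two barriers.
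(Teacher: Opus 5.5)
Your proposal is correct and follows the paper's proof. It uses two fully occupied barriers of $Q$ consecutive residues, the bound $|a(g)|\le Q$ to show that no cop-free path, of any length, can cross a barrier, and each round it translates one barrier into the robber's gap by a fixed nonzero generator image. Which barrier moves (left or right) is decided by the sign of $a$, and your reflection/role-swap amounts to exactly this choice. The only difference is presentational: you run the sweep in $\Cay\left(\ZZ_m,\pi\left(S\right)\setminus\left\{0\right\}\right)$ and invoke Proposition~\ref{prop:lifting}, whereas the paper's proof works with the full fibres $\pi^{-1}\left(I\right)$ directly in $\Gam$; the paper's own remark preceding this proposition identifies that as an application of the lifting proposition.
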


\begin{proof}
Choose two disjoint intervals of $Q$ consecutive residues in $\ZZ_m$, separated
in each cyclic direction by at least one unoccupied residue. The complement of
their union consists of two nonempty intervals, which we call \emph{gaps}.
Occupy the full preimage of each selected interval with cops. Every fibre of
$\pi$ has cardinality $\frac{n}{m}$, so the resulting configuration uses
$$
  2Q\frac nm
$$
cops.

Fix a cyclic orientation of $\ZZ_m$. After the robber chooses its initial
vertex $x_0$, its image $\pi\left(x_0\right)$ lies in one of the two unoccupied
gaps; denote this gap by $J\left(0\right)$, and write $J\left(t\right)$ for the
gap containing the robber's image after $t$ uncaptured rounds. Let $I_L$ and
$I_R$ denote the two occupied intervals, labelled so that, in the chosen
orientation, $I_L$ immediately precedes $J\left(0\right)$ and $I_R$
immediately follows it; we call these the \emph{left} and \emph{right}
barriers. Exactly one barrier moves throughout the strategy, so
$J\left(t\right)$ is bounded
by $I_L$ and $I_R$ for every $t$.

Since $S$ generates $G$ and $\pi$ is onto, some generator has nonzero image. Choose $g_*\in S$ with
$$
  a:=a\left(g_*\right)\in\ZZ\setminus\left\{0\right\}.
$$
Thus $\pi\left(g_*\right)$ is a residue modulo $m$, whereas $a$ is its least-absolute-value integer representative. In particular, $a$ may be positive or negative, and
$$
  \pi\left(x+g_*\right)=\pi\left(x\right)+a\pmod m
$$
for every $x\in G$. If $a>0$, the left barrier is translated by $g_*$, while
the right barrier remains fixed. If $a<0$, the right barrier is translated by
$g_*$, while the left barrier remains fixed. In either case, the moving
barrier advances $\left|a\right|$ residues into the robber's gap.

Let $I\subseteq\ZZ_m$ be the interval occupied by the moving barrier before
translation. Its occupied vertex set is $\pi^{-1}\left(I\right)$. Since
translation by $g_*$ is a bijection of $G$ and
$\pi\left(g_*\right)=a\pmod m$,
$$
  \pi^{-1}\left(I\right)+g_*=\pi^{-1}\left(I+a\right).
$$
Indeed, if $y\in\pi^{-1}\left(I+a\right)$, then $y-g_*\in\pi^{-1}\left(I\right)$, so the cop formerly at $y-g_*$ moves to $y$. Thus every vertex in every fibre above the translated interval $I+a$ is occupied.

If the translated barrier contains the robber's residue, then a cop occupies the robber's vertex and capture occurs. Otherwise the robber remains in a gap $J\left(t+1\right)$ bounded by the translated barrier and the stationary barrier, and
$$
  \left|J\left(t+1\right)\right|=\left|J\left(t\right)\right|-\left|a\right|.
$$

No legal robber path can move from this gap to the other complementary gap.
To verify this assertion, consider a legal robber path
$$
  x_0,x_1,\dots,x_\ell.
$$
Any directed step from one complementary gap to the other across a barrier of
$Q$ consecutive occupied residues requires cyclic displacement at least
$Q+1$. Each projected generator has least-absolute-value displacement at most
$Q$, so no individual robber step can cross either barrier. Since an arbitrary
robber move is a sequence of generator steps, the robber can leave
$J\left(t+1\right)$ only by visiting an occupied fibre, which is forbidden.

Thus the robber remains in a gap whose size decreases by $\left|a\right|\ge1$ in every uncaptured round. If $\left|J\left(t\right)\right|\le\left|a\right|$, then the next translation of the moving barrier contains every residue of the gap; because the barrier occupies full fibres, a cop occupies the robber's vertex. Hence capture occurs after finitely many rounds.
\end{proof}

\begin{remark}
It suffices for the interval length to equal the largest \emph{single-step}
displacement in the quotient, because the robber may not pass through occupied
vertices. The interval length is independent of the total length of the
robber's move.
\end{remark}

\begin{remark}
For the directed cycle $\Cay\left(\ZZ_n,\left\{1\right\}\right)$ with $n\ge4$, take $K=\left\{0\right\}$; then $r_K=1$ and $N_K=n-1$, and the construction yields $c_{1,\infty}\le2$. For $n=3$, two cops suffice when placed on consecutive vertices. Conversely, one cop does not suffice for any $n\ge3$: the robber initially chooses a vertex at directed distance two from the cop and advances one step after each move of the cop. Thus the directed distance from the cop to the robber is again two after every robber move. Consequently,
$$
  c_{1,\infty}\left(\Cay\left(\ZZ_n,\left\{1\right\}\right)\right)=2
  \qquad\left(n\ge3\right).
$$
\end{remark}

\subsection{The optimized quotient bound}\label{sec:main}

The character-selection lemma and the cyclic sweeping proposition together
prove the optimized quotient bound.

\begin{proof}[Proof of Theorem~\ref{thm:intro-quotient}]
Use the notation in the theorem.
Fix a proper subgroup $K<G$. If $N_K\le 2$, then
$\frac{2n}{N_K}\ge n$, so this estimate is no stronger than the elementary
bound. We may therefore assume that $N_K\ge 3$.

Apply Proposition~\ref{prop:character}. We obtain a surjective homomorphism
$$
  \pi:G\to \ZZ_m
$$
for some $m>N_K$ such that every generator image has least-absolute-value representative bounded by
$$
  Q:=\ceil{\frac m{N_K}}-1.
$$
Thus $1\le Q<\frac{m}{N_K}$. Since $m>N_K\ge3$, we have $m\ge4$, and hence $Q<\frac{m}{3}$. Therefore
$$
  2Q+2
  \le 2\ceil{\frac m3}
  \le \frac{2m+4}{3}
  \le m.
$$
Thus Proposition~\ref{prop:sweep} applies and yields
$$
  c_{1,\infty}\left(\Gam\right)\le \frac{2Qn}{m}.
$$
Since $Q<\frac{m}{N_K}$, we obtain
$$
  c_{1,\infty}\left(\Gam\right)< \frac{2n}{N_K}.
$$
This holds for every proper subgroup $K$ with $N_K\ge3$. If $N_*\le2$, then
$\frac{2n}{N_*}\ge n$, while placing one cop at each vertex gives
$c_{1,\infty}\left(\Gam\right)\le n$. Choosing $K$ with $N_K=N_*$ and combining
with Proposition~\ref{thm:domination} proves the stated minimum.
\end{proof}

\section{Consequences and concluding remarks}\label{sec:consequences}

\subsection{Bounded-degree and sparse consequences}

\begin{proof}[Proof of Theorem~\ref{thm:intro-bounded}]
Use the notation in the theorem.
Let $T,t,H$ be as in Section~\ref{sec:prelim}. Since $H$ is generated by $t\le D$ involutions,
$$
  \left|H\right|\le 2^t\le 2^D.
$$
If $H=G$, then $n\le2^D$, and the result follows after enlarging the implicit
constant depending on $D$. Otherwise take $K=H$ in
Theorem~\ref{thm:intro-quotient}, and write $r_H,N_H$ for the corresponding
parameters. Then
$$
  \left|G/H\right|\ge \frac{n}{2^D},
  \qquad
  r_H\le \left|S\right|-t\le D.
$$
Define
$$
  N_H
  :=\floor{\left(\left|G/H\right|-1\right)^{\frac{1}{r_H}}}.
$$
For all sufficiently large $n$,
$$
  N_*\ge N_H=\Omega_D\left(n^{\frac{1}{D}}\right).
$$
Theorem~\ref{thm:intro-quotient} gives the directed estimate.

If $S=-S$ and $q:=\floor{\frac{D}{2}}$, then the non-involutory generators form inverse pairs before passing to the quotient, and hence
$$
  r_H\le\frac{\left|S\right|-t}{2}\le q.
$$
Thus
$$
  N_*\ge N_H=\Omega_D\left(n^{\frac{1}{q}}\right),
$$
and Theorem~\ref{thm:intro-quotient} gives the claimed undirected estimate.
The finitely many remaining orders are absorbed into the constant depending on $D$. The optimality assertion follows from the Cartesian powers of cycles discussed in Section~\ref{sec:conclusion}.
\end{proof}

The quotient-sweeping estimate is sublinear whenever the degree grows more
slowly than $\log n$.

\begin{corollary}[Logarithmic degree]\label{cor:logdeg}
Fix $\kappa>0$. Suppose $\Gam=\Cay\left(G,S\right)$ is a strongly connected abelian Cayley digraph on $n$ vertices with out-degree
$
  d\le \frac{\log n}{\kappa\log\log n}
$
for all sufficiently large $n$. Then for every $s\ge 1$,
$$
  c_{1,s}\left(\Gam\right)\le c_{1,\infty}\left(\Gam\right)
  = O\!\left(\frac{n}{\left(\log n\right)^{\kappa}}\right).
$$
If $S=-S$, the denominator improves to $\left(\log n\right)^{2\kappa}$.
\end{corollary}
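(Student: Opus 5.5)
The plan is to imitate the proof of Theorem~\ref{thm:intro-bounded}, using $K=H$ in Theorem~\ref{thm:intro-quotient}. The new ingredient is that the degree bound $d\le\frac{\log n}{\kappa\log\log n}$ keeps both $|H|$ and $r_H$ small enough that $N_H$ is at least a power of $\log n$.

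\emph{Step 1: the subgroup $H$.} Since $H$ is generated by $t\le d$ involutions, $|H|\le 2^t\le 2^d\le n^{\frac{\log 2}{\kappa\log\log n}}=n^{o(1)}$. Hence $H\ne G$ for large $n$, and $|G/H|\ge n2^{-d}$.

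\emph{Step 2: the exponent.} By Remark~\ref{rem:rH}, $r:=r_H\le d-t$. We may assume $r\ge1$, which Proposition~\ref{prop:character} guarantees in any case. Then
$$
  \frac{\log\left(|G/H|-1\right)}{r}\ \ge\ \frac{\log n - d\log 2-1}{d-t}.
$$
The right-hand side is at least $\frac{\log n}{d}-\log 2-o(1)$ when $t=0$. In general I will use $(\log n-t\log 2)/(d-t)$, which is minimized at $t=0$ because $\log n/d\ge\log 2$ for large $n$. Substituting $d\le\frac{\log n}{\kappa\log\log n}$ gives $\frac{\log n}{d}\ge\kappa\log\log n$, so
$$
  \left(|G/H|-1\right)^{1/r}\ \ge\ \frac{(\log n)^{\kappa}}{2e^{o(1)}}.
$$
Therefore $N_*\ge N_H=\Omega_\kappa\left((\log n)^\kappa\right)$. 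The floor is harmless because this quantity tends to infinity.

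\emph{Step 3: conclusion.} Theorem~\ref{thm:intro-quotient} then gives $c_{1,\infty}(\Gamma)\le 2n/N_*=O\left(n/(\log n)^\kappa\right)$. In the symmetric case, Remark~\ref{rem:rH} gives $r_H\le (d-t)/2$, which doubles the exponent and yields $(\log n)^{2\kappa}$.

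The main obstacle is the bookkeeping in Step 2. The additive losses ($-1$ inside the root, the $2^t$ from $H$, and the floor) must cost only a constant factor and not a power of $\log n$. The point to check is that dividing $\log|H|\le t\log 2$ by $r\le d-t$ contributes at most a bounded additive term to the exponent. This holds because $\frac{\log n-t\log2}{d-t}\ge\frac{\log n}{d}$ whenever $\log n\ge d\log 2$, a monotonicity I would verify directly.
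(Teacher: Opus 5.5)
Your proposal is correct and follows essentially the same route as the paper: take $K=H$ in Theorem~\ref{thm:intro-quotient}, bound $|H|$ and $r_H$, show that $N_H$ is of order at least $n^{1/d}\ge(\log n)^{\kappa}$ (or $n^{2/d}\ge(\log n)^{2\kappa}$ when $S=-S$), and conclude; your tracking of $t$ via $|H|\le 2^t$ and $r_H\le d-t$ is a slight refinement of the paper's cruder $|G/H|\ge n/2^d$, $r_H\le d$, which gives $N_H\ge n^{1/d}/8$ directly. One small slip: the term $1/d$ in Step 2 is at most $1$ but not $o(1)$ when $d$ stays bounded, so your bound should read $(\log n)^{\kappa}/(2e)$; this changes only the constant and does not affect the conclusion.
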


\begin{proof}
Let $T,t,H$ be as in Section~\ref{sec:prelim}. We have $\left|H\right|\le2^d$. If $H=G$, then $n\le2^d$, which contradicts the assumed upper bound on $d$ for all sufficiently large $n$. Hence $G/H$ is nontrivial. Take $K=H$ in Theorem~\ref{thm:intro-quotient}, and write $r_H,N_H$ for the corresponding parameters. Then
$$
  \left|G/H\right|\ge\frac{n}{2^d},
  \qquad r_H\le d.
$$
For sufficiently large $n$, this implies
$$
  N_H:=\floor{\left(\left|G/H\right|-1\right)^{\frac{1}{r_H}}}
  \ge \left(\frac{n}{2^{d+1}}\right)^{\frac{1}{d}}-1
  \ge \frac{n^{\frac{1}{d}}}{8}.
$$
Now
$$
  n^{\frac{1}{d}}=\exp\!\left(\frac{\log n}{d}\right)
  \ge \exp\!\left(\kappa\log\log n\right)=\left(\log n\right)^{\kappa}.
$$
Since $N_*\ge N_H$, Theorem~\ref{thm:intro-quotient} gives the absolute estimate
$$
  c_{1,\infty}\left(\Gam\right)\le
  16\frac{n}{\left(\log n\right)^{\kappa}}.
$$
If $S=-S$, then $r_H\le \frac d2$, and the same proof gives the stated
denominator $\left(\log n\right)^{2\kappa}$; one may take $64$ as an absolute
constant.
\end{proof}

Combining the quotient estimate in the sparse range with domination in the
dense range removes the degree hypothesis.

\begin{proof}[Proof of Theorem~\ref{thm:intro-degree-free}]
Assume that $n$ is sufficiently large, as permitted by the asymptotic statement, and define
$$
  L:=\log n,\qquad \ell:=\log L,\qquad d:=\left|S\right|.
$$
Suppose first that $d\ge \frac{L}{\ell}$. The function
$x\longmapsto\frac{1+\log x}{x}$
is decreasing for $x\ge1$, so Proposition~\ref{thm:domination} gives
$$
  c_{1,\infty}\left(\Gam\right)
  \le n\frac{1+\log\left(d+1\right)}{d+1}
  =O\!\left(\frac{n\ell^2}{L}\right).
$$

Now suppose that $d<\frac{L}{\ell}$. Let $T,t,H$ be as in Section~\ref{sec:prelim}. Since $\left|H\right|\le2^t$, the equality $H=G$ would imply
$$
  L=\log n\le t\log2\le d\log2<\frac{L\log2}{\ell},
$$
which is impossible for large $n$. Thus $H<G$. Since $S$ generates $G$, some
generator lies outside $H$; every element of $T$ lies in $H$, so
$d-t\ge1$. Define $\overline n:=\left|G/H\right|$, and let $r_H,N_H$ be the
parameters of Theorem~\ref{thm:intro-quotient} for $K=H$. Since
$\overline n\ge \frac{n}{2^t}$ and $\overline n\ge2$,
$$
  \log\left(\overline n-1\right)\ge L-t\log2-\log2.
$$
Also $r_H\le d-t$. For large $n$, one has $\ell\ge2\log2$, and
$$
\begin{aligned}
 &L-\left(t+1\right)\log2-\left(d-t\right)\left(\ell-\log2\right)\\
 &\qquad=\left(L-d\ell\right)+\left(d-1\right)\log2+t\left(\ell-2\log2\right)\ge0.
\end{aligned}
$$
Consequently,
$$
  \log\!\left(\left(\overline n-1\right)^{\frac{1}{r_H}}\right)
  \ge\frac{L-t\log2-\log2}{d-t}
  \ge\ell-\log2.
$$
It follows that $N_H=\Omega\left(L\right)$, and hence $N_*\ge N_H=\Omega\left(\log n\right)$. Theorem~\ref{thm:intro-quotient} gives
$$
  c_{1,\infty}\left(\Gam\right)=O\!\left(\frac n{\log n}\right),
$$
which is stronger than required in this range.
\end{proof}

\begin{remark}
  For each fixed finite $s$, Frieze--Krivelevich--Loh proved that every
  undirected $n$-vertex graph $\Gam$ satisfies
  \cite[Theorem 1.3]{FriezeKrivelevichLohVariationsOnCopsRobbers}
$$
  c_{1,s}\left(\Gam\right)
  \le
  \frac{n}{\exp\!\left(\left(1-o\left(1\right)\right)
  \sqrt{\log\!\left(1+\frac{1}{s}\right)\log n}\right)}.
$$
Thus, for fixed $s$, their bound is asymptotically stronger than our degree-independent
bound for undirected abelian Cayley graphs. When the degree is bounded, our
quotient estimate instead improves the trivial linear bound by a polynomial
factor in $n$. Moreover,
Theorems~\ref{thm:intro-quotient} and~\ref{thm:intro-degree-free} are uniform
in the robber's speed, apply when $s=\infty$, and include directed abelian
Cayley graphs. To the best of our knowledge, no general sublinear fast-robber
bound for strongly connected digraphs was previously available.
\end{remark}

\subsection{Examples, lower bounds, and questions}\label{sec:conclusion}

The following isoperimetric lower bound complements the quotient upper bound in
the undirected case. Mehrabian's proof uses undirected components and paths;
accordingly, no directed analogue is asserted here.

\begin{proposition}[Mehrabian]\label{prop:isoperimetric-lower}
Let $\Gam$ be a connected graph on $n$ vertices with maximum degree $\Delta$, and let $\iota_v\left(\Gam\right)$ denote its vertex-isoperimetric number. Then
$
  c_{1,\infty}\left(\Gam\right)
  \ge \frac{\iota_v\left(\Gam\right)n}{4\Delta}.
$
In particular, if $\Gam=\Cay\left(G,S\right)$ has degree $d$, then
$
  c_{1,\infty}\left(\Gam\right)
  \ge \frac{\iota_v\left(\Gam\right)n}{4d}.
$
\end{proposition}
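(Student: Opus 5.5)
We prove Mehrabian's bound by a robber strategy against any set of $k<\frac{\iota_v\left(\Gam\right)n}{4\Delta}$ cops; the robber evades forever. The guiding idea is that a small cop set $C$ cannot separate the graph into small pieces. If every component of $\Gam-C$ had at most $\frac n2$ vertices, then we could group components into a set $A$ with $\frac n4\le\left|A\right|\le\frac n2$. This works as follows: if some component has more than $\frac n4$ vertices take it alone; otherwise add components greedily until the total exceeds $\frac n4$, which keeps the total below $\frac n2$. Then $\partial_v A\subseteq C$, since no edge joins distinct components of $\Gam-C$. Hence $\left|C\right|\ge\iota_v\left(\Gam\right)\frac n4$. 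So whenever $\left|C\right|<\frac{\iota_v n}{4}$, the graph $\Gam-C$ has a unique \emph{giant} component with more than $\frac n2$ vertices.

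The robber's invariant is to sit in the giant component of $\Gam-C_t$ after each robber move, where $C_t$ is the current cop set. Initially it simply chooses a vertex there. Suppose that after its move the robber is in the giant component $X$ of $\Gam-C$ and the cops move to $C'$. We need two things. First, the cops must not capture it: capture means a cop stepped onto the robber's vertex. Second, the robber must be able to travel, by a path avoiding $C'$, to the giant component $X'$ of $\Gam-C'$. Since the robber has unbounded speed, it suffices that its current vertex lies in $X'$ (or is connected to $X'$ avoiding $C'$). Both $X\setminus C'$ and $X'$ are large, so the key is to show $\left(X\setminus C'\right)\cap X'\neq\varnothing$ and that the robber's vertex lies in it. The robber's vertex is not guaranteed to survive, so we strengthen the invariant.

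We therefore use a stronger robber rule, and this is where the factor $4\Delta$ enters. The robber stays at a vertex of $X$ none of whose neighbours is occupied; in other words, it avoids $N\left[C\right]$. Since $\left|N\left[C\right]\right|\le\left(\Delta+1\right)\left|C\right|$, we argue as follows. Consider the set $C\cup N\left(C\right)$, of size at most $\left(\Delta+1\right)k\le 2\Delta k<\frac{\iota_v n}{2}$. After the cops move, every new cop lies in $N\left[C\right]$. We apply the separation argument to $B:=N\left[C\right]$, which has size less than $\frac{\iota_v n}{2}\cdot\frac{\Delta+1}{2\Delta}$; we adjust constants so that $\left|B\right|<\frac{\iota_v n}{4}\cdot\ldots$, and this is exactly where the hypothesis $k<\frac{\iota_v n}{4\Delta}$ is used. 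With that, $\Gam-B$ has a giant component $Y$. Because $C'\subseteq B$, the set $Y$ is cop-free after the cops move, and it lies inside both the giant component of $\Gam-C$ and that of $\Gam-C'$, by uniqueness of components of size $>\frac n2$. The robber was at a vertex of $Y$, chosen so before the cops moved: it moves into $Y$, and $Y$ also avoids $N\left[C\right]$. So it was not adjacent to any cop and cannot be captured. After the cops' move it travels within the giant component of $\Gam-C'$ to a vertex of the analogous set $Y'$ for $B':=N\left[C'\right]$. This is possible because $Y'$ is also contained in the giant component of $\Gam-C'$.

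The main obstacle is the bookkeeping of the sizes. We need $\left|N\left[C\right]\right|\le\left(\Delta+1\right)k$ to stay below the threshold at which a giant component of the complement is forced. Rechecking the balanced-cut argument, the threshold is governed by $\left|\partial_v A\right|\ge\iota_v\left|A\right|$ with $\left|A\right|\ge\frac n4$, so we need $\left(\Delta+1\right)k<\frac{\iota_v n}{4}$. This gives the bound up to replacing $\Delta$ by $\Delta+1$. To get exactly $4\Delta$, we would note that the robber's vertex is not a cop and refine the count to $\left|C\cup N\left(C\right)\right|\le\Delta k$ plus the relevant slack, or simply absorb this, since $\iota_v\le\Delta$ makes the statement trivial in small cases. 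The Cayley statement follows immediately, since $\Delta=d$ for a $d$-regular graph.
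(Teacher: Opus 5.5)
\textbf{Gap: your argument proves only $c_{1,\infty}(\Gamma)\ge\frac{\iota_v(\Gamma)n}{4(\Delta+1)}$, not the stated bound with $4\Delta$.} The paper gives no argument here and simply cites Mehrabian's Theorem 3.3(c), so your robber strategy is a self-contained route, and the strategy itself is right. Keeping the robber in the component $Y(C)$ of $\Gamma-N[C]$ with more than $\frac n2$ vertices works for two reasons. First, $C'\subseteq N[C]$ prevents capture. Second, $Y(C)$ and $Y(C')$ both lie in the unique component of $\Gamma-C'$ with more than $\frac n2$ vertices, so an unbounded-speed robber can travel from one to the other.

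The failing step is the size bound for $B=N[C]$. Your separation lemma needs $|B|<\frac{\iota_v n}{4}$. From $k<\frac{\iota_v n}{4\Delta}$ and $|N[C]|\le(\Delta+1)k$ you only get $|B|<\frac{(\Delta+1)\iota_v n}{4\Delta}$. The phrase ``we adjust constants so that\dots'' does not bridge this. Neither repair you suggest works either:
\begin{itemize}
\item The estimate $|C\cup N(C)|\le\Delta k$ is false in general. Cops at pairwise distance at least $3$ in a $\Delta$-regular graph give $|N[C]|=(\Delta+1)k$.
\item A multiplicative discrepancy between $\Delta$ and $\Delta+1$ cannot be ``absorbed'' into an inequality with an explicit constant.
\end{itemize}

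\textbf{The fix.} Do not bound $|\partial_v A|$ by $|N[C]|$ at all. Let $A$ be a union of components of $\Gamma-N[C]$. No vertex of $A$ is adjacent to a cop, since such a vertex would lie in $N(C)\subseteq N[C]$. Hence no cop vertex lies in $\partial_v A$, so $\partial_v A\subseteq N[C]\setminus C$. This set has size at most $\Delta|C|\le\Delta k$. Choosing $\frac n4\le|A|\le\frac n2$ by your grouping argument then gives $\frac{\iota_v n}{4}\le\iota_v|A|\le\Delta k$. This is exactly the threshold in the statement.

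You should also check that the greedy grouping can reach total size $\frac n4$, i.e.\ that $n-|N[C]|\ge\frac n4$; your proposal uses this silently. It holds for the following reasons:
\begin{itemize}
\item $|N[C]|\le(\Delta+1)k<\frac{\iota_v n}{2}$, using $\Delta\ge1$.
\item Testing $|A|=\lfloor n/2\rfloor$ gives $\iota_v\le\lceil n/2\rceil/\lfloor n/2\rfloor\le\frac32$ for every $n\ge2$ with $n\ne3$.
\item For $n=3$ the claimed bound is below $1$, so it is trivial.
\end{itemize}
With these two additions, your strategy proves the proposition with the stated constant.
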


\begin{proof}
The assertion is \cite[Theorem 3.3(c)]{MehrabianFastRobberExpandersRandomGraphs}.
\end{proof}

\begin{remark}\label{rem:near-tight}
Theorem~\ref{thm:intro-degree-free} is close to optimal within the class of
abelian Cayley graphs. By the Alon--Roichman theorem
\cite{AlonRoichmanRandomCayley}, for every finite abelian group $G$ of order
$n$ and every $\varepsilon>0$ there is a symmetric set
$S\subseteq G\setminus\left\{0\right\}$ with
$\left|S\right|=O_\varepsilon\left(\log n\right)$ such that
$\Gam:=\Cay\left(G,S\right)$ has normalized second eigenvalue at most
$\varepsilon$. Taking, for example, $\varepsilon=\frac12$, Tanner's inequality
implies that its vertex-isoperimetric number is bounded below by a positive
absolute constant. Proposition~\ref{prop:isoperimetric-lower} gives
$$
  c_{1,\infty}\left(\Gam\right)
  \ge\frac{\iota_v\left(\Gam\right)n}{4\left|S\right|}
  =\Omega\!\left(\frac n{\log n}\right)
$$
for these graphs. Hence no bound of the form
$o\!\left(\frac n{\log n}\right)$ can hold for all abelian Cayley graphs.
Thus the upper bound in Theorem~\ref{thm:intro-degree-free} differs from this
lower bound by a factor of at most $\left(\log\log n\right)^2$.
\end{remark}

The quotient estimate is sharp in order on Cartesian powers of cycles. Let
$$
  \Gam=C_m\square\cdots\square C_m
  =\Cay\left(\ZZ_m^k,\left\{\pm e_1,\dots,\pm e_k\right\}\right),
  \qquad m\ge4.
$$
Projection onto the first coordinate has quotient order $m$ and maps every generator to $0$ or $\pm1$. Proposition~\ref{prop:sweep} therefore gives
$$
  c_{1,\infty}\left(\Gam\right)\le 2m^{k-1}.
$$
The resulting estimate coincides with the Cartesian-product upper bound of Mehrabian
\cite[Theorem 5.1(a)]{MehrabianFastRobberExpandersRandomGraphs}. When $m$ is even, his matching lower estimate establishes
$$
  \frac{m^{k-1}}{2k^2}
  \le c_{1,\infty}\left(C_m^{\square k}\right)
  \le 2m^{k-1},
$$
so the sweeping estimate is sharp up to a constant factor for every fixed $k$
along even cycle lengths
\cite[Theorem 5.1(c)]{MehrabianFastRobberExpandersRandomGraphs}. For $k=2$, Kinnersley and Townsend sharpened this to
$$
  2m-24\le c_{1,\infty}\left(C_m\square C_m\right)\le2m
  \qquad\left(m\ge18\right),
$$
so the sweeping construction has the correct leading constant in dimension two
\cite[Theorem 2.7]{KinnersleyTownsendInfiniteGrid}.

These examples also establish the optimality of the undirected bounded-degree exponent in Theorem~\ref{thm:intro-bounded}. Fix $D\ge2$, define $q:=\floor{\frac{D}{2}}$, and set $k=q$. The graphs $C_m^{\square q}$ have degree $2q\le D$, order $n=m^q$, and, for even $m$,
$$
  c_{1,\infty}\left(C_m^{\square q}\right)
  =\Omega_q\left(m^{q-1}\right)
  =\Omega_D\left(n^{1-\frac{1}{q}}\right).
$$
Thus the optimal uniform order is constant for $D=2,3$, is $n^{\frac{1}{2}}$ for $D=4,5$, and is $n^{\frac{2}{3}}$ for $D=6,7$. Moreover, the Kinnersley--Townsend lower bound shows that the factor $2$ in Proposition~\ref{prop:sweep} cannot be replaced by any smaller universal constant.

Optimization over the subgroup can be necessary to obtain the correct order
of magnitude. For example, let $M\ge4$ be even and consider
$$
  \Gam=C_M\square C_4
  =\Cay\left(\ZZ_M\times\ZZ_4,\left\{\pm e_1,\pm e_2\right\}\right).
$$
Here $H=\left\{0\right\}$, and the parameter associated with $K=H$ is
$$
  N_H=\floor{\left(4M-1\right)^{\frac{1}{2}}}=\Theta\left(\sqrt M\right),
$$
which yields the weaker bound $O\left(\sqrt M\right)$. By contrast, projection
onto the first coordinate has fibres of size $4$ and maps every generator to
$0$ or $\pm1$, so Proposition~\ref{prop:sweep} gives
$$
  c_{1,\infty}\left(C_M\square C_4\right)\le8.
$$
The same example shows that $N_H$ need not be comparable with the inverse
isoperimetric scale. Letting $A$ be the union of $M/2$ consecutive
$C_4$-fibres gives
$$
  \iota_v\left(C_M\square C_4\right)\le\frac4M,
$$
whereas $N_H=\Theta\left(\sqrt M\right)$.

For a fixed subgroup $K$, every cyclic quotient arising from a character of
$G/K$ has order at most $\operatorname{exp}\left(G/K\right)$. Since any
nontrivial cyclic barrier configuration occupies at least two full fibres,
every such configuration uses at least
$\frac{2n}{\operatorname{exp}\left(G/K\right)}$ cops. When $G$ has exponent two, every
cyclic quotient has order at most two, and the cyclic-sweep estimate is no
stronger than the elementary bound. The noncyclic quotient and
dominating-subgroup construction in Theorem~\ref{thm:intro-alon} may
nevertheless yield a nontrivial bound.

\begin{question}
Fix $d$. Does every connected undirected $d$-regular abelian Cayley graph $\Gam$ on $n$ vertices satisfy
$$
  c_{1,\infty}\left(\Gam\right)=\Theta_d\left(n\iota_v\left(\Gam\right)\right)?
$$
The lower bound is Proposition~\ref{prop:isoperimetric-lower}. In view of Theorem~\ref{thm:intro-quotient}, the upper bound would follow from a comparison of the form
$$
  N_*=\Omega_d\!\left(\frac{1}{\iota_v\left(\Gam\right)}\right).
$$
\end{question}

\begin{question}\label{q:degree-reduction-fast}
Is there a degree reduction for the fast-robber game? More precisely, are there a
function $f$ and a constant $C\ge1$ such that every graph $\Gam$ on $n$
vertices with maximum degree $\Delta$ admits a subcubic graph $H$ on at most
$f\left(\Delta\right)n$ vertices with
$$
  c_{1,Cs}\left(H\right)\ge c_{1,s}\left(\Gam\right)
  \qquad\left(s\ge1\right)?
$$
The analogous problem for digraphs, based on
\cite[Theorem 6]{HosseiniMoharGonzalezBoundedDegree}, also remains open.
\end{question}

When regarded as symmetric digraphs, the Alon--Mehrabian examples have cop
number of order $n^{\frac{s}{s+1}}$ for both a speed-$s$ robber and an
unbounded-speed robber.
For the ordinary directed game, Bradshaw--Hosseini--Turcotte constructed
abelian Cayley digraphs with cop number of order $\sqrt n$
\cite[Theorem 5.2]{BradshawHosseiniTurcotteCopsRobbersDirUnditAbelianCayley}.

\end{document}